\newtheorem{thm}{Theorem}[section]
\newtheorem{lem}[thm]{Lemma}
\newtheorem{prop}[thm]{Proposition}
\theoremstyle{definition}
\theoremstyle{remark}
\newtheorem{rem}[thm]{Remark}
\begin{document}

\title[$W^{2,1}$ estimate for singular solutions]{$W^{2,1}$ estimate for singular solutions to the Monge-Amp\`{e}re equation}
\author{Connor Mooney}
\address{Department of Mathematics, Columbia University, New York, NY 10027}
\email{\tt  cmooney@math.columbia.edu}


\begin{abstract}
We prove an interior $W^{2,1}$ estimate for singular solutions to the Monge-Amp\`{e}re equation, and construct an example to show 
our results are optimal.

\vspace{3mm}

\noindent \textbf{Mathematics Subject Classification} 35J96, 35B65. 
\end{abstract}
\maketitle
\section{Introduction}
Interior $W^{2,p}$ estimates for the Monge-Amp\`{e}re equation
$$\det D^2u = f \quad \text{ in } \Omega, \quad \quad u|_{\partial \Omega} = 0$$
were first obtained by Caffarelli assuming that $f$ has small oscillation depending on $p$ (see \cite{C2}).

In the case that we only have $\lambda \leq f \leq \Lambda,$
De Philippis, Figalli and Savin recently obtained interior $W^{2,1+\epsilon}$ estimates for some $\epsilon$ depending only on $n,\Lambda$ and $\Lambda$ (see \cite{DF},\cite{DFS}). 
This result is optimal in light of counterexamples due to Wang (\cite{W}) obtained by seeking solutions with the homogeneity
$$u(x,y) = \frac{1}{\lambda^{2+\alpha}}u(\lambda x,\lambda^{1+\alpha}y).$$

These can be viewed as estimates for strictly convex solutions to the Monge-Amp\`{e}re equation. Indeed, at a point $x$ where $u$ is strictly convex we 
can find a tangent plane that touches only at $x$ and lift it a little to carve out a set where $u$ has linear boundary data.

In \cite{M} we show that solutions to $\lambda \leq \det D^2u \leq \Lambda$ are strictly convex away from a singular set of Hausdorff $n-1$ dimensional measure zero, and
as a consequence we prove $W^{2,1}$ regularity for singular solutions. We also construct for any $\epsilon$ a singular
solution to $\det D^2u = 1$ in $B_1 \subset \mathbb{R}^n$ ($n \geq 3$) with a singular set of Hausdorff dimension at least $n-1-\epsilon$ which is not in $W^{2,1+\epsilon}$.
However, as $\epsilon \rightarrow 0$ these examples become arbitrarily large. 
In this paper we give a more precise, quantitative version of the work done in \cite{M} and improve the examples. Our main theorem is:

\begin{thm}\label{Main}
 Assume that 
 $$\lambda \leq \det D^2u \leq \Lambda \quad \text{ in } B_1 \subset \mathbb{R}^n, \quad \quad \|u\|_{L^{\infty}(B_1)} < K.$$
 Then for some $\epsilon(n)$ and $C(n,\lambda,\Lambda,K)$ we have $\Delta u \in L \log^{\epsilon}L$ and
 $$\int_{B_{1/2}} \Delta u \left(\log(1+\Delta u)\right)^{\epsilon} \, dx \leq C.$$
\end{thm}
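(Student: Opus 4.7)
The theorem is a quantitative refinement of the qualitative $W^{2,1}$ regularity proved in \cite{M}. The De Philippis--Figalli--Savin $W^{2,1+\epsilon_0}$ estimate of \cite{DF,DFS} already yields far more than $L\log^\epsilon L$ on any section of bounded affine eccentricity. The task is therefore to control $\Delta u$ in a shrinking neighborhood of the singular set $\Sigma$ where strict convexity fails and sections become degenerate, and to sum the local contributions over dyadic scales without losing more than a logarithmic factor.

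First I would cover $B_{1/2}\setminus \Sigma$ by ``good'' sections: for each $x_0 \notin \Sigma$ let $h(x_0)$ be the largest height for which the section at $x_0$, taken at a supporting plane touching $u$ only at $x_0$, has affine-normalization eccentricity at most a fixed $M_0 = M_0(n,\lambda,\Lambda)$. A Vitali subfamily $\{S_i\}$ with bounded overlap covers $B_{1/2}\setminus U(h_*)$, where $U(h_*) := \{x : h(x) < h_*\}$. On each $S_i$, the DFS estimate applied to the normalized function $u\circ T_i^{-1}$ and transported back via the affine map $T_i$ yields
$$\int_{S_i} \Delta u \,(\log(1+\Delta u))^{\epsilon_0}\, dx \leq C |S_i|,$$
with constants uniform in $i$ because the eccentricity is bounded by $M_0$. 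Summing over the Vitali family controls the integral on $B_{1/2}\setminus U(h_*)$, uniformly in $h_*$.

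The main obstacle is the contribution from $U(h_*)$ as $h_* \to 0$. Here one needs a quantitative version of the compactness argument of \cite{M} that drives $|U(h_*)| \to 0$, specifically a power bound $|U(h_*)\setminus U(h_*/2)| \leq C h_*^{\delta}$ for an explicit $\delta = \delta(n,\lambda,\Lambda,K) > 0$, paired with a distribution estimate for $\Delta u$ on the elongated sections in this shell, obtained by applying DFS to each after renormalization and tracking carefully the dependence on the eccentricity $\kappa \sim h_*^{-\beta}$. A dyadic sum in $h_*$ then telescopes, provided $\epsilon$ is taken small compared to the gap between $\delta$ and the eccentricity-loss exponent. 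The hard part is establishing the power decay $\delta$: one must convert the qualitative compactness-and-contradiction step in \cite{M} into an explicit dichotomy, showing that at each small scale either strict convexity is gained with a definite modulus of improvement, or the $(n{-}1)$-dimensional Minkowski content of $\Sigma$ is strictly smaller, and iterate this dichotomy to extract the required power rate.
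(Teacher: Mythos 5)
Your treatment of the nondegenerate region is broadly consistent with the paper's: there one decomposes by the height $\bar h(x)$ of the maximal section, applies the rescaled De Philippis--Figalli--Savin estimate on each section of a Vitali subcover, and controls the number of sections at each dyadic height (the paper does not restrict to bounded eccentricity; it absorbs the eccentricity loss using the interior Lipschitz bound $d\geq ch$ and a counting estimate $M_\gamma\gamma^{n/2-1}\leq C$). The problem is your plan for the near-singular region.

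The power decay $|U(h_*)\setminus U(h_*/2)|\leq Ch_*^{\delta}$ that your argument hinges on is not available, and the paper's own optimality example shows why: it produces a solution whose singular set has Hausdorff dimension exactly $n-1$, with covering sums satisfying $\sum_i r_i|\log r_i|^{15}\geq c>0$. Any dichotomy yielding a definite power rate $\delta>0$ for the Minkowski content of the near-singular set would force dimension at most $n-1-\delta'$, contradicting this example; the only quantitative gain over dimension $n-1$ that survives is logarithmic. This is precisely why the conclusion of the theorem is $L\log^{\epsilon}L$ rather than $L^{1+\epsilon}$. Moreover, even granting some smallness of $|U(h_*)|$, you still need to bound $\int_{U(h_*)}\Delta u$, and applying DFS on elongated sections cannot reach the actual singular points, where no compactly contained sections exist at all. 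The mechanism the paper uses instead, and which is absent from your proposal, is a mass-comparison argument: setting $v=u+\frac{1}{2}|x|^2$, one shows (Proposition \ref{MongeAmpereMass}, proved via the quantitative geometry of maximal sections in Lemmas \ref{TwoBehaviors} and \ref{Induction}) that near every point with $\bar h(x)\leq t^{-1/2}$ there is a radius $r$ with $|\log r|\geq c|\log t|^{1/2}$ and $Mv(B_r(x))\geq cr^{n-1}|\log r|^{\eta}$, while trivially $\int_{B_r(x)}\Delta u\leq Cr^{n-1}$ by the divergence theorem and the Lipschitz bound. Since $Mv(B_1)$ is a finite measure, summing over a Vitali subcover gives $\int_{\{\Delta u>t\}\cap\{\bar h\leq t^{-1/2}\}}\Delta u\leq C|\log t|^{-\eta/2}$, which is the logarithmic decay that, combined with the nondegenerate part and a layer-cake integration, yields the $L\log^{\epsilon}L$ bound. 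Without this comparison (or an equivalent substitute), your outline cannot close.
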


We also construct an example with a singular set of Hausdorff dimension exactly $n-1$ and second derivatives not in $L \log^M L$ for $M$ large, 
showing that the main theorem is in a sense optimal and that we cannot improve our estimate on the Hausdorff dimension to $n-1-\epsilon$ for any $\epsilon$.
Since solutions in two dimensions are strictly convex, this result is interesting for $n \geq 3$.

The paper is organized as follows. In section $2$ we present some preliminaries on the geometry of sections. In section $3$ we state our key proposition 
and use it to prove Theorem \ref{Main}. In section $4$ we prove the key proposition, which is a quantitative version
of work done in \cite{M} obtained by closely examining the geometry of maximal sections. Finally, in sections $5$ and $6$ we construct
an example with a singular set of Hausdorff dimension $n-1$ and show that it gives optimality of Theorem \ref{Main}. 

\section*{Acknowledgements}

I would like to thank my thesis advisor Ovidiu Savin for his patient guidance and encouragement. 
The author was partially supported by the NSF Graduate Research Fellowship Program under grant number DGE 1144155. 


\section{Preliminaries}
Let $u: \Omega \subset \mathbb{R}^n \rightarrow \mathbb{R}$ be a convex function. Then $u$ has an associated Borel measure $Mu$, called the Monge-Amp\`{e}re measure, defined by
$$Mu(A) = |\nabla u(A)|$$
where $|\nabla u(A)|$ represents the Lebesgue measure of the image of the subgradients of $u$ in $A$ (see \cite{Gut}). We say that $u$ solves $\det D^2u = f$ in the
Alexandrov sense if 
$$Mu = f \,dx.$$

We define a section of $u$ by
$$S_h(x) = \{y \in \Omega: u(y) < u(x) + \nabla u(x) \cdot (y-x) + h\}$$
for some subgradient $\nabla u(x)$ at $x$. Finally, we define $D_{n,\lambda,\Lambda,K}$ to be the collection of convex functions satisfying
$$\lambda \leq \det D^2u \leq \Lambda \quad \text{ in } B_1 \subset \mathbb{R}^n, \quad \quad \|u\|_{L^{\infty}(B_1)} \leq K$$
in the Alexandrov sense and we say that a constant depending only on $n,\lambda,\Lambda$ and $K$ is a universal constant.
In this section we recall some geometric observations about sections of solutions in $D_{n,\lambda,\Lambda,K}$.

\begin{lem}\label{JohnsLemma}
 (John's Lemma). If $S \subset \mathbb{R}^n$ is a bounded convex set with nonempty interior, and $0$ is the center of mass of $S$, then there exists
 an ellipsoid $E$ and a dimensional constant $C(n)$ such that
 $$E \subset S \subset C(n)E.$$ 
\end{lem}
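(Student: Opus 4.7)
The plan is to reduce to the case where $S$ is in \emph{isotropic position} via a linear change of variables that preserves the centroid at the origin, and then exploit convexity together with the centroid condition to sandwich $S$ between two concentric balls centered at $0$. Pulling back by the linear map converts these balls into the required concentric ellipsoids $E$ and $C(n)E$.

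For the normalization, set $M = \int_S (x \otimes x)\, dx$; this matrix is symmetric and positive definite since $S$ has nonempty interior. Taking $A = M^{-1/2}$, the image $S' := A(S)$ has covariance a scalar multiple of the identity, i.e., $S'$ is isotropic, and since $A$ is linear its centroid remains at $0$. It suffices to produce universal constants $0 < r_1 \leq r_2$ such that $B_{r_1} \subset S' \subset B_{r_2}$; then $E := A^{-1}(B_{r_1})$ and $C(n) := r_2/r_1$ complete the argument.

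For the outer inclusion, if $S'$ extended a distance $R$ along some unit direction $e$, then convexity and the centroid condition would force a controlled-measure piece of $S'$ on which $(y \cdot e)^2 \gtrsim R^2$, so that the $(e,e)$ entry of the covariance is at least of order $R^2$; isotropy (which pins this entry) then bounds $R$ universally. For the inner inclusion I would use the key consequence of the centroid condition that $-\tfrac{1}{n}S' \subset S'$: starting from any inscribed ball $B_\rho(y) \subset S'$, one also has $B_{\rho/n}(-y/n) \subset S'$, so the convex hull of these two balls lies in $S'$ and contains a ball of radius $\sim \rho/n$ centered at the origin. A volume comparison in isotropic position yields a universal lower bound on $\rho$, closing the argument.

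The main obstacle is the inner-ball estimate, which is precisely where the centroid hypothesis is essential: without it, a thin slab translated off-center can be made isotropic by a linear change yet contain no ball of comparable radius. The inclusion $-\tfrac{1}{n}K \subset K$ for convex $K$ with centroid at the origin is the quantitative form of the centroid condition used here, and it itself follows from a one-dimensional computation on cross-sections of $K$ by lines through the origin.
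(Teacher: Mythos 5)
The paper quotes this lemma as a classical fact and offers no proof, so the only question is whether your argument stands alone. Your overall strategy --- pass to isotropic position and exploit the centroid fact $-\tfrac{1}{n}K \subset K$ --- is viable, but there is a normalization problem at the outset and a genuine gap in each of the two inclusions. First, with $A = M^{-1/2}$ the covariance of $S' = A(S)$ is $(\det M)^{-1/2}I$: a scalar multiple of the identity whose scale is not fixed. Since every centered ball $B_R$ is ``isotropic'' in this sense, no universal $r_1, r_2$ with $B_{r_1} \subset S' \subset B_{r_2}$ can exist as you claim; you must compose with a dilation so that the covariance is exactly $I$ before ``universal constants'' makes sense.

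More seriously: for the outer inclusion, knowing that the $(e,e)$ covariance entry equals $1$ does not by itself bound the extent of $S'$ in the direction $e$. A long thin cone with apex at $Re$ and a tiny base has $\bigl|\{y \in S' : (y\cdot e)^2 \gtrsim R^2\}\bigr|$ only a fixed fraction of $|S'|$, and $|S'|$ has no lower bound at this stage, so the inequality $1 \geq cR^2\mu$ yields nothing. For the inner inclusion, ``a volume comparison yields a universal lower bound on $\rho$'' is precisely the missing content: the natural comparison $|S'| \geq |B_{c\rho}|$ bounds $\rho$ from \emph{above} by $|S'|^{1/n}$, and reversing it requires a universal lower bound on $|S'|$ in isotropic position, which you do not supply (with a dimension-free constant this is the slicing problem; even with a dimensional constant it needs an argument, and the usual one goes through John's lemma itself). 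Both steps can be closed elementarily in your framework as follows: since the centered ball minimizes $\int |y|^2$ among sets of fixed volume while $\int_{S'}|y|^2 = n$, one gets $|S'| \leq C(n)$; then $1 = \int_{S'} (y\cdot e)^2\,dy \leq |S'|\max\bigl(h_{S'}(e), h_{S'}(-e)\bigr)^2$ together with the centroid fact $h_{S'}(-e) \leq n\,h_{S'}(e)$ gives $h_{S'}(e) \geq c(n)$ for every unit $e$, hence $B_{c(n)} \subset S'$; finally the cone over this inner ball with apex at any $p \in S'$ bounds the circumradius via the second moment. One last point: the inclusion $-\tfrac{1}{n}K \subset K$ does not follow from a one-dimensional computation on chords through the origin, since a single line does not see the mass distribution; the standard proof compares $K$ with cones over hyperplane sections using Brunn--Minkowski.
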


\noindent We call $E$ the John ellipsoid of $S$. There is some linear transformation $A$ such that $A(B_1) = E$, and we say that $A$ normalizes
$S$.

In the following two lemmas we present an important observation on the volume growth of sections that are not compactly contained and relate the volume of compactly contained sections
to the Monge-Amp\`{e}re mass of these sections. Short proofs can be found in \cite{M}.

\begin{lem}\label{SectionGrowth}
 Assume that $\det D^2u \geq \lambda$ in $\Omega \subset \mathbb{R}^n$. Then if $S_h(x)$ is any section of $u$, we have
 $$|S_h(x)| \leq Ch^{n/2}$$
 for some constant $C$ depending only on $\lambda$ and $n$.
\end{lem}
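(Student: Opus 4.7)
The plan is to normalize the section via John's lemma and then to bound the Monge-Amp\`ere mass of a fixed ball well inside the normalized section in two different directions. Replacing $u$ by
\[
v(y) \;:=\; u(y) - u(x) - \nabla u(x)\cdot(y-x)
\]
for the subgradient used to define $S_h(x)$ leaves the Monge-Amp\`ere measure and the section unchanged, so we may assume $v \geq 0$, $v(x) = 0$, and $S_h(x) = \{v < h\} \cap \Omega$. Apply Lemma~\ref{JohnsLemma} to the bounded convex set $S_h(x)$ to produce an affine normalization $A(y) = My + b$ with
\[
B_1 \;\subset\; A(S_h(x)) \;\subset\; C(n)\,B_1, \qquad |\det M|^{-1} \sim |S_h(x)|.
\]
Set $\tilde v(y) := h^{-1}\, v(A^{-1}y)$ on $\tilde S := A(S_h(x))$. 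Then $0 \leq \tilde v < 1$ on $\tilde S$, and the transformation law for the Monge-Amp\`ere operator gives
\[
\det D^2 \tilde v \;=\; \frac{\det D^2 v \circ A^{-1}}{(\det M)^2 h^n} \;\geq\; c(n)\,\frac{\lambda\,|S_h(x)|^2}{h^n} \;=:\; \mu.
\]

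The heart of the argument is to bound $M\tilde v(B_{1/2})$ from above using the subgradient inequality. For any $y \in B_{1/2}$ and any $q \in \partial \tilde v(y)$, the point $z := y + \tfrac12\, q/|q|$ lies in $B_1 \subset \tilde S$, so $\tilde v(z) < 1$; combined with $\tilde v(y) \geq 0$, the subgradient inequality then yields
\[
\tfrac12 |q| \;=\; q\cdot(z-y) \;\leq\; \tilde v(z) - \tilde v(y) \;<\; 1,
\]
so $\nabla \tilde v(B_{1/2}) \subset B_2$. Since the Monge-Amp\`ere measure dominates the absolutely continuous part of the Hessian determinant, this gives
\[
\mu\,|B_{1/2}| \;\leq\; \int_{B_{1/2}} \det D^2 \tilde v \;\leq\; M\tilde v(B_{1/2}) \;=\; |\nabla \tilde v(B_{1/2})| \;\leq\; |B_2|,
\]
which forces $\mu \leq 4^n$. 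Substituting the expression for $\mu$ yields $|S_h(x)|^2 \leq C(n)\, h^n/\lambda$, as claimed.

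The main delicate point is really the bookkeeping in the Jacobian of the Monge-Amp\`ere operator under the John normalization, together with the observation that the subgradient step needs only the two-sided bound $0 \leq \tilde v < 1$ and the inclusion $B_1 \subset \tilde S$. In particular, one does \emph{not} need the image $A(x)$ of the base point to sit near the centre of mass of $\tilde S$, which is important because $x$ could be eccentrically placed inside $S_h(x)$; the estimate is obtained by working on the ball centred at the John centre, independently of where $A(x)$ lands.
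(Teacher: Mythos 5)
Your proof is correct. Every step checks out: the normalization $\tilde v = h^{-1}v\circ A^{-1}$ with the Jacobian factor $(\det M)^{-2}h^{-n}$ is right, the inclusion $\partial\tilde v(B_{1/2})\subset B_2$ follows exactly as you say from $0\le\tilde v<1$ on $\tilde S\supset B_1$, and the chain $\mu|B_{1/2}|\le M\tilde v(B_{1/2})\le|B_2|$ closes the estimate. (In the Alexandrov setting the first inequality is immediate, since the hypothesis $\det D^2u\ge\lambda$ already means $Mu\ge\lambda\,dx$ as measures; no appeal to absolutely continuous parts is needed.)

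However, this is a genuinely different route from the paper's. The paper's proof is a barrier by above in the John ellipsoid: after the same normalization, one compares $\tilde v-1$ on $B_1\subset\tilde S$ with the paraboloid $P(y)=\frac{\mu^{1/n}}{2}(|y|^2-1)$, which satisfies $\det D^2P=\mu\le\det D^2(\tilde v-1)$ and $P=0\ge\tilde v-1$ on $\partial B_1$; the comparison principle gives $\tilde v-1\le P$ in $B_1$, and evaluating at the center, where $\tilde v-1\ge-1$, forces $\mu^{1/n}\le 2$, i.e.\ the same bound $\lambda|S_h(x)|^2\lesssim h^n$. Both arguments use John's lemma and the two-sided bound $0\le\tilde v<1$, and neither needs the image of the base point $x$ to land near the center (the barrier argument only uses $\tilde v\ge 0$ \emph{at} the center of $B_1$, so your closing remark applies equally to it). What your version buys is that it bypasses the comparison principle for Alexandrov solutions entirely, needing only the definition of $Mv$ as the measure of the subgradient image plus the elementary Lipschitz bound for a convex function bounded between $0$ and $1$ on a set containing $B_1$; what the barrier buys is uniformity with the rest of the paper's toolkit (Lemma \ref{Alexandrov} is proved by a closely related cone comparison) and a one-line computation once the comparison principle is available. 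One small point common to both: applying Lemma \ref{JohnsLemma} presupposes that $S_h(x)$ is bounded, which is automatic in the paper's applications ($\Omega=B_1$) and is implicitly assumed in the statement.
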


\noindent The proof is just a barrier by above in the John ellipsoid for $S_h(x)$.

\begin{lem}\label{Alexandrov}
 Let $v$ be any convex function on $\Omega \subset \mathbb{R}^n$ with $v|_{\partial \Omega} = 0$. Then
 $$Mv(\Omega)\,|\Omega| \geq c(n)|\min_{\Omega}v|^n.$$
\end{lem}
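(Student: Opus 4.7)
The plan is to exploit the affine invariance of the asserted inequality to reduce to a normalized setting via John's lemma, where the result then follows from a standard cone/subgradient estimate.

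First, both sides of the inequality $Mv(\Omega) \cdot |\Omega| \geq c(n)|\min v|^n$ are invariant under an invertible affine change of variables: if $T$ is linear and $\tilde v(y) := v(Ty)$ on $\tilde\Omega := T^{-1}(\Omega)$, the change-of-variables formula gives $M\tilde v(\tilde \Omega) = (\det T)\,Mv(\Omega)$ and $|\tilde \Omega| = |\Omega|/\det T$, so the product and the minimum value are both unchanged. Applying John's lemma (Lemma \ref{JohnsLemma}) to $\Omega$, I may therefore assume $B_{c(n)} \subset \Omega \subset B_{C(n)}$ for dimensional constants; in this normalized setting $|\Omega|$ and $\mathrm{diam}(\Omega)$ are both comparable to $1$, and the target reduces to $Mv(\Omega) \geq c(n)|\min v|^n$.

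To close, let $x_0 \in \Omega$ attain the minimum $m = -M < 0$ of $v$, and for each $p \in \mathbb{R}^n$ with $|p| \leq M/(2\,\mathrm{diam}(\Omega))$ consider the linear function $\ell_p(y) := m + p \cdot (y - x_0)$. Then $\ell_p(x_0) = v(x_0)$, while $\ell_p \leq -M/2 < 0 = v$ on $\partial\Omega$; hence $v - \ell_p$ attains its minimum at some interior point $y_p$, and at $y_p$ the vector $p$ is a subgradient of $v$. This shows $\nabla v(\Omega) \supset B_{M/(2\,\mathrm{diam}(\Omega))}$, so $Mv(\Omega) = |\nabla v(\Omega)| \geq c(n) M^n/\mathrm{diam}(\Omega)^n$; combined with the normalization this gives the desired bound.

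The main place where care is needed is the reduction step: without John's lemma, the subgradient-ball argument only yields a bound with $\mathrm{diam}(\Omega)^n$ in place of $|\Omega|$, which for thin elongated $\Omega$ is strictly weaker (one would otherwise have to invoke a polar-volume inequality such as Bourgain--Milman to recover $|\Omega|$). Passing to the John-normalized domain is precisely what swaps $\mathrm{diam}(\Omega)^n$ for $|\Omega|$ and bridges the gap between the naive subgradient inclusion and the sharp volume estimate.
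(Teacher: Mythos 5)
Your proof is correct and is essentially the paper's argument: the set of slopes $p$ with $|p| \le M/(2\,\mathrm{diam}\,\Omega)$ that you realize as subgradients of $v$ at interior points is exactly (a ball inside) the subgradient at the vertex of the cone generated by the minimum point and $\partial\Omega$, which is the comparison the paper's one-line sketch refers to. The John-normalization step is the standard way to upgrade the resulting $\mathrm{diam}(\Omega)^{-n}$ to $|\Omega|^{-1}$, so the argument is complete as written.
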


\noindent The proof is by comparing to the Monge-Amp\`{e}re mass of the function whose graph is the cone generated by the minimum point of $v$ and $\partial \Omega$.

Next, we recall the following geometric observation of Caffarelli for solutions to the Monge-Amp\`{e}re equation with bounded right hand side (see \cite{C1}).
It says that compactly contained sections $S_h(x)$ are balanced around $x$.

\begin{lem}\label{Balancing}
  Assume that $\lambda \leq \det D^2u \leq \Lambda$ in $\Omega \subset \mathbb{R}^n$. Then there exist
  $c,C(n,\lambda,\Lambda)$ such that for all $S_h(x) \subset\subset \Omega$, there is an ellipsoid $E$ centered at $0$ of volume $h^{n/2}$ with
  $$cE \subset S_h(x)-x \subset CE.$$
\end{lem}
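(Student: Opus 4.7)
My plan is to normalize the section $S_h(x)$ by an affine map so that its John ellipsoid becomes the unit ball, and then to show that the vertex (translated by the normalization) lies at universally positive distance from the boundary of the normalized section; pulling back will then yield the ellipsoid centered at $x$.

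First I would reduce to $x=0$ by translating, and subtract the supporting linear function so that $u(0) = 0$ with $0 \in \partial u(0)$. Then $S := S_h(0) = \{u < h\}$. I estimate the section's volume: Lemma \ref{SectionGrowth} yields $|S| \leq C h^{n/2}$, while Lemma \ref{Alexandrov} applied to $v = u - h$ gives $\Lambda |S|^2 \geq c(n) h^n$, so $|S| \approx h^{n/2}$. By Lemma \ref{JohnsLemma} I obtain $z_0 + E_0 \subset S \subset z_0 + C(n)E_0$ with $|E_0| \approx h^{n/2}$. Choosing a linear $B$ with $B(E_0) = B_1$, setting $T(y) = B(y - z_0)$, and defining $\tilde u(y) = h^{-1}(u(T^{-1}y) - h)$ on $T(S)$, I get $B_1 \subset T(S) \subset C(n) B_1$, $\tilde u$ vanishing on $\partial T(S)$ and attaining its minimum $-1$ at $p := T(0)$, with $\det D^2 \tilde u$ bounded between positive universal constants.

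The hard part will be showing $\mathrm{dist}(p, \partial T(S)) \geq c > 0$. I would compare $\tilde u$ with the convex cone $\Gamma$ on $T(S)$ having apex at $p$ of value $-1$ and zero boundary values. Convexity of $\tilde u$ along each ray from $p$ to $\partial T(S)$ forces $\tilde u \leq \Gamma$, so by the standard Monge-Amp\`ere comparison (the smaller convex function with shared boundary values carries the greater Monge-Amp\`ere mass),
\[ M\tilde u(T(S)) \;\geq\; M\Gamma(T(S)) \;=\; \bigl|(T(S) - p)^{\circ}\bigr|, \]
the volume of the polar body. An elementary computation—using that the polar contains a ball of radius $1/\mathrm{diam}(T(S))$ around the origin together with a segment of length $1/\mathrm{dist}(p, \partial T(S))$ in the direction toward the closest boundary point—yields $|(T(S) - p)^{\circ}| \geq c_n / (\mathrm{diam}(T(S))^{n-1}\,\mathrm{dist}(p, \partial T(S)))$. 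Combined with the upper bound $M\tilde u(T(S)) \leq C\Lambda |T(S)| \leq C'$ and $\mathrm{diam}(T(S)) \leq 2C(n)$, this forces $\mathrm{dist}(p, \partial T(S)) \geq c$.

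From this distance estimate I have $B_c(p) \subset T(S) \subset p + 2C(n) B_1$ (the outer inclusion uses $|p| \leq C(n)$). Applying $T^{-1}$ and noting that $T^{-1}(p + rB_1) = r\, B^{-1}(B_1)$ is an ellipsoid centered at the origin (since $T^{-1}(p) = 0$), I conclude $c\, B^{-1}(B_1) \subset S \subset 2C(n)\, B^{-1}(B_1)$ with $|B^{-1}(B_1)| \approx h^{n/2}$. Rescaling $B^{-1}(B_1)$ by a universal factor to have volume exactly $h^{n/2}$ gives the required ellipsoid $E$. The principal obstacle is the distance-to-boundary estimate, which requires upgrading the volume bound of Lemma \ref{Alexandrov} through the cone-comparison and polar-volume argument described above—essentially an Alexandrov-Bakelman-Pucci-type input.
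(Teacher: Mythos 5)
Your argument is correct and complete in outline: the paper itself gives no proof of this lemma, simply citing Caffarelli \cite{C1}, and your normalization via John's lemma followed by the cone-comparison/polar-body estimate for $\mathrm{dist}(p,\partial T(S))$ is precisely the classical argument from that reference (and from \cite{Gut}). All the individual steps check out — the two-sided volume bound $|S|\approx h^{n/2}$, the identity $M\Gamma(T(S))=|(T(S)-p)^{\circ}|$, the lower bound $c/d$ for the polar volume, and the pullback under $T^{-1}$ — so there is nothing to fix.
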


Finally, we give the following engulfing and covering properties of compactly contained sections (see \cite{CG} and \cite{DFS}). In the following
$\alpha S_h(x)$ will denote the $\alpha$ dilation of $S_h(x)$ around $x$.

\begin{lem}\label{EngulfingProps}
 Assume that $\lambda \leq \det D^2u \leq \Lambda$ in $\Omega$. Then there exists $\delta > 0$ universal such that:
\begin{enumerate}
 \item If $S_h(x) \subset\subset \Omega$ then
 $$S_{\delta h}(x) \subset \frac{1}{2}S_h(x).$$
 \item Suppose that for some compact $D \subset \Omega$, we can associate to each $x \in D$ some $S_h(x) \subset\subset \Omega$. Then we can find a finite subcollection
 $\{S_{h_i}(x_i)\}_{i=1}^M$ such that $S_{\delta h_i}(x_i)$ are disjoint and
 $$D \subset \cup_{i=1}^M S_{h_i}(x_i).$$
\end{enumerate}
\end{lem}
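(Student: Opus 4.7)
The plan is to normalize each compactly contained section via the balancing property (Lemma \ref{Balancing}) and an affine change of variables, then exploit Alexandrov's estimate and a compactness argument.

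For part (1), apply the affine map $A$ sending the balancing ellipsoid of $S_h(x)$ to $B_1$ and set $\tilde v(y) = \frac{1}{h}[u(x+A^{-1}y) - u(x) - \nabla u(x)\cdot A^{-1}y]$. Then $\tilde v(0)=0$, the section $\{\tilde v<1\}$ satisfies $B_c \subset \{\tilde v<1\} \subset B_C$ for universal $c,C$, and since $|\det A|^2 \sim h^{-n}$ (because $|E|\sim h^{n/2}$), a direct computation gives $\lambda' \leq \det D^2 \tilde v \leq \Lambda'$ with universal bounds. It suffices to prove $\{\tilde v<\delta\}\subset B_{c/2}$ for some universal $\delta$, since then $\{\tilde v<\delta\} \subset \frac{1}{2}B_c \subset \frac{1}{2}\{\tilde v<1\}$.

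I would argue by contradiction: suppose along a sequence $\delta_k\to 0$ there exist $y_k$ with $\tilde v_k(y_k)<\delta_k$ and $|y_k|\geq c/2$. The bounds $\tilde v_k(0)=0$ and $\tilde v_k\leq 1$ on $B_c$ give uniform local Lipschitz estimates, so after passing to a subsequence $\tilde v_k \to \tilde v_\infty$ locally uniformly, with $\tilde v_\infty$ convex, $\det D^2 \tilde v_\infty \in [\lambda',\Lambda']$, and minima attained both at $0$ and at some $y_\infty$ with $|y_\infty|\geq c/2$. Convexity forces $\tilde v_\infty \equiv 0$ on the segment $[0,y_\infty]$; then near an interior point of the segment the subgradient image of any small ball lies in a hyperplane perpendicular to the segment, so the Monge-Amp\`ere mass of a small neighborhood vanishes, contradicting $\det D^2 \tilde v_\infty \geq \lambda' > 0$ via Lemma \ref{Alexandrov}.

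For part (2), since $D$ is compact and each $S_{h_x}(x)$ is an open set containing $x$, first extract a finite subcover $\{S_{h_j}(x_j)\}_{j=1}^N$ of $D$. Reorder so $h_1\geq\cdots\geq h_N$ and select greedily: set $i_1=1$ and inductively take $i_{k+1}$ to be the smallest $j>i_k$ such that $S_{\delta h_j}(x_j)$ is disjoint from every previously selected $S_{\delta h_{i_\ell}}(x_{i_\ell})$; stop when no such $j$ remains. The selected $\delta$-dilations are disjoint by construction. For any non-selected $j$, some selected $i_\ell \leq j$ has $h_{i_\ell}\geq h_j$ and a common point $y \in S_{\delta h_j}(x_j)\cap S_{\delta h_{i_\ell}}(x_{i_\ell})$. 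Part (1) gives $y\in \frac{1}{2}S_{h_{i_\ell}}(x_{i_\ell})$, and since $x_j\in S_{\delta h_j}(x_j)\subset \frac{1}{2}S_{h_j}(x_j)$ whose balancing ellipsoid has axes bounded by those of $S_{h_{i_\ell}}(x_{i_\ell})$ (by $h_{i_\ell}\geq h_j$ and Lemma \ref{Balancing}), a short calculation --- possibly after replacing $\delta$ by a smaller universal constant --- yields $x_j\in S_{h_{i_\ell}}(x_{i_\ell})$, completing the cover. The main obstacle is the last step of part (1): with only Alexandrov's estimate and balancing in hand, ruling out a flat segment in the limit is essentially a qualitative strict-convexity statement, and once this is secured the Vitali selection in part (2) is routine.
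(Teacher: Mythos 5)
The paper does not prove this lemma; it is quoted from \cite{CG} and \cite{DFS}. Your normalization set-up is fine, but both parts have genuine gaps at exactly the places where the real content of the lemma lives.

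In part (1), the final step of the compactness argument is wrong. If $\tilde v_\infty\geq 0$ vanishes on a segment $[0,y_\infty]$ whose endpoints are \emph{interior} points, the subgradient image of a small ball $B_\rho$ around the midpoint does \emph{not} lie in a hyperplane: the two endpoint inequalities $0=\tilde v_\infty(0)\geq \tilde v_\infty(z)-p\cdot z$ and $0=\tilde v_\infty(y_\infty)\geq \tilde v_\infty(z)+p\cdot(y_\infty-z)$ only force $|p\cdot \frac{y_\infty}{|y_\infty|}|\leq C\rho\,|p|$, i.e.\ the image lies in a slab of thickness $O(\rho)$. That slab has measure $O(\rho)$, which for $n\geq 2$ is \emph{larger} than the required lower bound $c\lambda\rho^{n}$, so there is no contradiction (compare $\max(|x_1|-a,0)+x_2^2$, which vanishes on an interior segment and whose gradient image of $B_\rho(0)$ has measure comparable to $\rho$). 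Ruling out an interior flat segment is precisely Caffarelli's localization theorem \cite{C1} --- the ``qualitative strict-convexity statement'' you flag but do not supply --- and its proof is a separate, nontrivial argument. Moreover the compactness detour is unnecessary: the standard route is the distance-weighted Alexandrov estimate $|\tilde v(z)-1|^{n}\leq C(n)\,\mathrm{diam}^{\,n-1}\,\mathrm{dist}(z,\partial S)\,M\tilde v(S)$ applied directly in the normalized section, which gives $\mathrm{dist}(z,\partial S)\geq c_0$ for $z\in\{\tilde v<\delta\}$; since $S$ is balanced about $x$, this puts $z$ in $(1-t)S$ for universal $t$, and iterating in $h$ yields the factor $\tfrac12$. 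This is quantitative and avoids any limit.

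In part (2), the selection scheme is standard but the engulfing step is not justified. The assertion that the balancing ellipsoid of $S_{h_j}(x_j)$ ``has axes bounded by those of $S_{h_{i_\ell}}(x_{i_\ell})$'' because $h_{i_\ell}\geq h_j$ does not follow from Lemma \ref{Balancing}: balancing controls only the \emph{volume} $h^{n/2}$ of each ellipsoid, and sections at comparable heights centered at different points can have wildly different eccentricities. What you actually need is the engulfing property of Caffarelli--Guti\'errez (if $S_{\delta h_1}(x_1)\cap S_{\delta h_2}(x_2)\neq\emptyset$ and $h_1\geq h_2$ then $S_{h_2}(x_2)\subset S_{h_1}(x_1)$ for $\delta$ small universal), which is itself a theorem requiring proof. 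Separately, even granting $x_j\in S_{h_{i_\ell}}(x_{i_\ell})$, your argument only covers the \emph{centers} of the discarded sections: a point of $D$ lying in a discarded $S_{h_j}(x_j)$ but in no selected section is not accounted for. You must either engulf the entire discarded section (not just its center), or run the selection over all of $D$ (every point of $D$ is a center) and then justify finiteness of the selected family separately.
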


\section{Statement of Key Proposition and Proof of Theorem \ref{Main}}

In this section we state the key proposition and use it to prove our main theorem. In \cite{M} we show that the Monge-Ampere mass of $u + \frac{1}{2}|x|^2$
in small balls around singular points is large compared to the mass of $\Delta u$. The proposition is a more precise, quantitative version of this statement
for long, thin sections. Let $\bar{h}(x) \geq 0$ be the largest $h$ such that $S_h(x) \subset\subset B_1$. We say that $S_{\bar{h}(x)}(x)$ is the maximal section at $x$.
If $\bar{h}(x) = 0$ then $x$ is a singular point.

\begin{prop}\label{MongeAmpereMass}
 If $u \in D_{n,\lambda,\Lambda,K}$, $v = u + \frac{1}{2}|x|^2$, $x \in B_{1/2}$ and $h > \bar{h}(x)$ then there exist $\eta(n)$ and $c$ universal such that for
 some $r$ with
 $$|\log r| > c|\log h|^{1/2},$$
 we have
 $$Mv(B_r(x)) > cr^{n-1}|\log r|^{\eta}.$$
\end{prop}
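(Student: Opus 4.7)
The plan is to bound $Mv$ from below by applying Alexandrov's estimate (Lemma \ref{Alexandrov}) to the nonnegative convex function
\[
w(y) := v(y)-v(x)-\nabla v(x)\cdot(y-x) = (u-\ell_u)(y) + \tfrac{1}{2}|y-x|^2,
\]
where $\ell_u$ is a supporting hyperplane to $u$ at $x$. Since $w$ differs from $v$ by an affine term, $Mw=Mv$, and the sub-level sets of $w$ coincide with the $v$-sections at $x$, with $\{w<t\}\subset S_t(x)\cap B_{\sqrt{2t}}(x)$.

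The first step is to exploit the geometry of the $u$-sections $S_t(x)$ for $t\in(\bar h(x),h)$. Each such section is not compactly contained in $B_1$, so its closure meets $\partial B_1$; since $x\in B_{1/2}$, convexity forces $S_t(x)$ to contain a segment from $x$ of length at least $1/2$ along some direction $e_1$. Together with the volume bound $|S_t(x)|\leq Ct^{n/2}$ from Lemma \ref{SectionGrowth}, this means the John ellipsoid of $S_t(x)$ has a ``long axis'' of length $L\geq c(n)>0$ and a thin transverse cross-section of volume $\lesssim t^{n/2}$. Since $u-\ell_u$ is nonnegative convex and vanishes at $x$, along the long direction the chord bound gives $(u-\ell_u)(x+se_1)\leq sh/L$ for $s\in[0,L]$, so the extra constraint $\{w<t\}$ restricts the length in direction $e_1$ to $\min(tL/h,\sqrt{2t})$, yielding
\[
|\{w<t\}|\leq C\min\!\bigl(tL/h,\,\sqrt{2t}\bigr)\,t^{n/2}.
\]
Plugging this into Lemma \ref{Alexandrov} applied to $w$ on $\{w<t\}\subset B_{\sqrt{2t}}(x)$ and setting $r=\sqrt{2t}$ yields the baseline estimate $Mv(B_r(x))\geq cr^{n-1}$ for $r\in(\sqrt{2\bar h},\sqrt{2h})$, with an improvement to $c(h/L)\,r^{n-2}$ in the sub-range $r\lesssim h/L$. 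This already gives the correct scaling up to the logarithmic factor.

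To upgrade to the factor $|\log r|^\eta$, the plan is to iterate this analysis across the dyadic sub-sections $S_{h_k}(x)$ with $h_k\in(\bar h,h)$: each is itself long and thin and gives a fresh Alexandrov estimate at its own scale $r_k\sim\sqrt{h_k}$, and the covering/engulfing structure of Lemma \ref{EngulfingProps} lets us pool these contributions inside a common ball $B_r(x)$. The threshold $|\log r|>c|\log h|^{1/2}$ then emerges from an AM-GM-type balance between the number of contributing dyadic scales and the volume price paid per scale. The main obstacle will be this last step: establishing that the scale-by-scale contributions are quasi-additive inside a single ball (rather than redundant), and extracting the precise dimensional exponent $\eta(n)$. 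This is essentially the quantitative refinement of the qualitative singular-set analysis carried out in \cite{M}.
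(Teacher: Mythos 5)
Your baseline estimate $Mv(B_r(x))\geq cr^{n-1}$ is essentially correct and corresponds to the ``easy'' part of the paper's argument (it is the content of Lemma \ref{LargeSectionEstimate}, proved there by slicing and summing rather than by a single Alexandrov estimate, but the mechanism --- long thin sections plus Lemma \ref{Alexandrov} --- is the same). One caveat even here: the refinement $|\{w<t\}|\leq C\min(tL/h,\sqrt{2t})\,t^{n/2}$ is not justified, because the chord bound $(u-\ell_u)(x+se_1)\leq sh/L$ is an \emph{upper} bound on $u-\ell_u$ along the long axis and therefore shows that $\{w<t\}$ \emph{contains} the segment up to $s\sim\min(tL/h,\sqrt t)$; it does not restrict the set. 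The only restriction in the long direction comes from the quadratic term, so you get $|\{w<t\}|\leq C\sqrt t\,t^{n/2}$ and hence only $cr^{n-1}$, with no improvement in any sub-range.

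The genuine gap is the factor $|\log r|^{\eta}$, which is the entire content of the proposition. Your plan to obtain it by pooling the baseline contributions over dyadic scales $h_k$ cannot work: the sets $\{w<t_k\}$, the balls $B_{r_k}(x)$, and the gradient images $\nabla v(\{w<t_k\})$ are all nested, so the masses at different scales are redundant rather than quasi-additive --- one only gets $Mv(B_r(x))\geq\max_k cr_k^{n-1}=cr^{n-1}$, and there is no ``AM-GM balance'' to be extracted. The paper's mechanism is entirely different: it restricts $u$ to the hyperplane transverse to the long axis and proves a dichotomy (Lemma \ref{TwoBehaviors}) showing that at some scale $h_0$ with $|\log h_0|>|\log h|^{1/2}$ the corresponding section of the restriction is thin in at least \emph{two} directions by a factor $|\log h_0|^{-\epsilon}$; the alternative is excluded by an iteration on the breadth $b(h)$, where a gain of $(1/2+C_1/|\log h|)$ per dyadic halving accumulates through the harmonic series to contradict the volume bound $|S_h|\leq Ch^{n/2}$ (this iteration is also the true source of the threshold $|\log r|>c|\log h|^{1/2}$). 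The logarithmic gain in the mass then comes from Lemma \ref{Induction}: Alexandrov applied to a transverse slice that is thinner than $h^{1/2}$ by a logarithmic factor yields mass $\gg r^{n-1}$ precisely because the relevant ball has the smaller radius $r\sim h^{1/2}|\log h|^{-\alpha}$. In short, the gain comes from anisotropy of the cross-sections of the maximal sections, not from accumulating scales; neither of these ideas appears in your outline, so the proof as proposed cannot be completed.
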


\begin{rem}
 Let $\Sigma$ denote the singular set of $u$, where $\bar{h} = 0$. It follows from proposition \ref{MongeAmpereMass} and a covering argument that
 $$\inf_{\delta > 0}\left\{\sum_{i=1}^{\infty} r_i^{n-1}|\log r_i|^{\eta}: \{B_{r_i}(x_i)\}_{i=1}^{\infty} \text{ cover } \Sigma, r_i < \delta\right\} = 0$$
 for some small $\eta(n)$, giving a quantitative version of the main theorem in \cite{M} for solutions to $\lambda \leq \det D^2u \leq \Lambda$.
\end{rem}

We will give a proof of Proposition \ref{MongeAmpereMass} in the next section by closely examining the geometric properties of maximal sections. 

The idea of the proof of Theorem \ref{Main} is to apply Proposition \ref{MongeAmpereMass} in the thin maximal sections, and then apply 
the $W^{2,1+\epsilon}$ estimate of \cite{DFS} in the larger sections to show the following decay of the integral of $\Delta u$ over its level sets:
\begin{equation}\label{IntegralDecay}
 \int_{\{\Delta u > t\}} \Delta u \, dx \leq \frac{C}{|\log t|^{\epsilon}},
\end{equation}
for some $\epsilon(n)$. Assuming this is true, theorem \ref{Main} follows easily by Fubini:
\begin{align*}
 \int_{B_{1/2}} \Delta u (\log(1 + \Delta u))^{\epsilon/2} \, dx &\leq C \int_{B_{1/2}} \Delta u \int_{1}^{1+\Delta u} \frac{1}{t(\log t)^{1-\epsilon/2}} \, dt \, dx \\
 &\leq C + C\int_{2}^{\infty} \frac{1}{t(\log t)^{1 - \epsilon/2}} \int_{\{\Delta u > t\}} \Delta u \, dx \, dt \\
 &\leq C + C\int_{2}^{\infty} \frac{1}{t(\log t)^{1+\epsilon/2}} \, dt \\
 &\leq C(\epsilon).
\end{align*}

To prove (\ref{IntegralDecay}), We first recall the following theorem of De Philippis, Figalli and Savin:

\begin{thm}\label{W21Epsilon}
 Assume that
 $$\lambda \leq \det D^2u \leq \Lambda \quad \text{ in } S_H(0), \quad \quad u|_{\partial S_H(0)} = 0$$
 and $B_1$ is the John ellipsoid for $S_H(0)$. Then there exist $C,\epsilon$ depending only on $\lambda,\Lambda$ and $n$ such that
 $$\int_{S_{H/2}(0) \cap \{\Delta u > t\}} \Delta u \, dx < Ct^{-\epsilon}.$$
\end{thm}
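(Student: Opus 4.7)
The plan is to read Theorem \ref{W21Epsilon} as a direct consequence of the full $W^{2,1+\epsilon}$ estimate of De Philippis, Figalli and Savin. Their main result asserts that in a normalized section $S_H(0)$ with $B_1$ as John ellipsoid, zero boundary data, and $\lambda \leq \det D^2 u \leq \Lambda$, one has
\[
\int_{S_{H/2}(0)} |D^2 u|^{1+\epsilon}\, dx \leq C(n,\lambda,\Lambda),
\]
for some $\epsilon$ depending only on the same parameters. Since $u$ is convex, all eigenvalues of $D^2u$ are nonnegative, so $0 \leq \Delta u \leq n\,\|D^2 u\|$, and therefore the same bound holds with $(\Delta u)^{1+\epsilon}$ in place of $|D^2 u|^{1+\epsilon}$ (up to a multiplicative factor $n^{1+\epsilon}$).

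With this in hand, the desired decay is one line of Chebyshev. On the set $\{\Delta u > t\}$ we have $(\Delta u)^{-\epsilon} < t^{-\epsilon}$, hence
\[
\int_{S_{H/2}(0) \cap \{\Delta u > t\}} \Delta u \, dx
= \int_{S_{H/2}(0) \cap \{\Delta u > t\}} \frac{(\Delta u)^{1+\epsilon}}{(\Delta u)^{\epsilon}}\, dx
\leq t^{-\epsilon} \int_{S_{H/2}(0)} (\Delta u)^{1+\epsilon}\, dx \leq C\, t^{-\epsilon}.
\]
So the reduction of Theorem \ref{W21Epsilon} to the cited estimate is routine; the pointwise inequality $\Delta u \leq t^{-\epsilon}(\Delta u)^{1+\epsilon}$ on $\{\Delta u > t\}$ is all one needs.

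The hard part is the underlying $W^{2,1+\epsilon}$ estimate itself. Were one to prove it from scratch, I would follow the DFS strategy: after renormalizing so that $B_1$ is the John ellipsoid of $S_H(0)$, use the two-sided Monge–Amp\`ere bound to control the geometry of sub-sections via Caffarelli's balancing (Lemma \ref{Balancing}), apply the engulfing and Vitali covering properties (Lemma \ref{EngulfingProps}) to cover the super-level set $\{|D^2 u| > t\}$ by a disjointified family of compactly contained sections, and then run an iteration on scales to upgrade a weak $L^1$ bound to a polynomial tail bound $|\{|D^2 u|>t\}\cap S_{H/2}(0)| \leq C t^{-(1+\epsilon)}$. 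This covering-plus-iteration is the entire technical content of DFS and is what I would expect to be the genuine obstacle; in the present paper it is taken as a black box, and the only argument required for Theorem \ref{W21Epsilon} is the Chebyshev reduction above.
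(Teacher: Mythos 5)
Your proposal is correct and matches the paper's treatment: Theorem \ref{W21Epsilon} is simply recalled from De Philippis--Figalli--Savin and used as a black box, and your Chebyshev reduction from their $L^{1+\epsilon}$ bound on $D^2u$ to the stated tail estimate is the standard (and correct) way to pass between the two formulations. Nothing further is required.
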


We will use the rescaled version of this theorem in the larger maximal sections.

\begin{lem}\label{RescaledW21Epsilon}
 If $u \in D_{n,\lambda,\Lambda,K}$ with $x \in B_{1/2}$ and $S_h(x) \subset\subset B_1$, then for $C$ universal and $\epsilon(n,\lambda,\Lambda)$ we have
 $$\int_{S_{h/2}(x) \cap \{\Delta u > t\}} \Delta u \, dx < Ch^{n/2-1-\epsilon}t^{-\epsilon}.$$
\end{lem}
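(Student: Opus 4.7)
The plan is to reduce to Theorem \ref{W21Epsilon} via the standard affine normalization of the section, then transfer the bound back via the change of variables. The new wrinkle is that $\Delta u$ is not affine-invariant, so the non-isotropic scaling introduces a factor of $\|A\|^2$ that must be controlled.

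First, I would let $A$ be a linear map normalizing $S_h(x)-x$, so $\tilde S := A(S_h(x)-x)$ has $B_1$ as its John ellipsoid. By Lemma \ref{Balancing}, $|\det A| \sim h^{-n/2}$. Letting $L(y) = u(x) + \nabla u(x)\cdot(y-x)$ denote the supporting hyperplane at $x$, the rescaled function
\[
\tilde u(z) := \frac{1}{h}\bigl(u(x+A^{-1}z) - L(x+A^{-1}z) - h\bigr)
\]
satisfies $\lambda \leq \det D^2 \tilde u \leq \Lambda$ in $\tilde S$ with $\tilde u|_{\partial \tilde S}=0$. Theorem \ref{W21Epsilon} applied to $\tilde u$ gives
\[
\int_{\tilde S_{1/2}(0)\,\cap\,\{\Delta \tilde u > s\}} \Delta \tilde u\, dz \leq C s^{-\epsilon}.
\]

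Next I would transfer this bound back via $y = x+A^{-1}z$. The Jacobian contributes $|\det A^{-1}| \sim h^{n/2}$, while the Hessian relation $D^2 u(y) = h\,A^T D^2\tilde u(z)\,A$ yields only the pointwise bound $\Delta u(y) \leq h\|A\|^2 \Delta\tilde u(z)$. Setting the threshold $s = t/(h\|A\|^2)$ in the theorem's bound gives
\[
\int_{S_{h/2}(x)\,\cap\,\{\Delta u > t\}} \Delta u\, dy \leq C\, h^{n/2+1+\epsilon}\|A\|^{2+2\epsilon}\, t^{-\epsilon}.
\]

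The main obstacle is controlling $\|A\|$: while $|\det A|$ is fixed by $h$, the operator norm $\|A\| = \sigma_n^{-1}$ (with $\sigma_n$ the shortest semi-axis of the balancing ellipsoid) has no a priori bound, since thin sections can in principle make $\|A\|$ arbitrarily large. I would close the argument by using $\|u\|_{L^\infty(B_1)} \leq K$ to show $\sigma_n \geq c h$ universally. Letting $v$ be the shortest-axis direction, the one-dimensional convex function $\phi(s) := u(x+sv) - L(x+sv)$ satisfies $\phi(0) = 0$ and $\phi(\pm t_\pm) = h$ with $c\sigma_n \leq t_\pm \leq C\sigma_n$. Since $\phi(s)/s$ is monotone for a convex $\phi$ vanishing at zero, $\phi(\pm 1/2) \geq h/(2C\sigma_n)$; combined with $x \in B_{1/2}$ (so $x\pm v/2 \in B_1$) and $\|u\|_\infty \leq K$, this yields
\[
\frac{h}{C\sigma_n} \leq \phi(1/2) + \phi(-1/2) = u(x+v/2) + u(x-v/2) - 2u(x) \leq 4K,
\]
so $\sigma_n \geq h/(4CK)$ and $\|A\|^{2+2\epsilon} \leq Ch^{-2-2\epsilon}$. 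Substituting gives the claimed bound $Ch^{n/2-1-\epsilon}t^{-\epsilon}$.
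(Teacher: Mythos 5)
Your proposal is correct and follows essentially the same route as the paper: normalize the section by the affine map from John's lemma, apply Theorem \ref{W21Epsilon} to the rescaled function, change variables back, and absorb the anisotropy factor (your $h\|A\|^2$, the paper's $h/d^2$) using the fact that the shortest axis of the section has length at least $ch$ by the interior Lipschitz bound coming from $\|u\|_{L^\infty}\leq K$. Your explicit convexity argument for $\sigma_n \geq ch$ is just a spelled-out version of the paper's one-line appeal to local Lipschitz continuity, so there is nothing substantively different to compare.
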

\begin{proof}
 By subtracting a linear function and translating assume that $x=0$ and $u|_{\partial S_h(0)} = 0$. Let
 $$u(x) = (\det A)^{2/n}\tilde{u}(A^{-1}x)$$
 where $A$ normalizes $S_h(x)$ and $\tilde{u}$ has height $H$. Then
 $$D^2 u(x) = C|S_h(0)|^{2/n}(A^{-1})D^2\tilde{u}(A^{-1}x)(A^{-1})^T.$$
 Applying the estimate on $|S_h(0)|$ from Lemma \ref{Balancing} and letting $d$ denote the length of the smallest axis for the John ellipsoid of $S_h(0)$, it follows that
 $$\Delta u(x) \leq C\left(\frac{h}{d^2}\right)\Delta\tilde{u}(A^{-1}x).$$
 Using change of variables and Theorem \ref{W21Epsilon} we obtain that
 \begin{align*}
  \int_{S_{h/2}(0)\cap \{\Delta u > t\}} \Delta u \, dx &\leq C(\det A)\left(\frac{h}{d^2}\right) \int_{S_{H/2}(0) \cap \{\Delta \tilde{u} > c\frac{d^2}{h}t\}} \Delta \tilde{u}(y) \,dy \\
  &\leq C(\det A)\left(\frac{h}{d^2}\right)^{1+\epsilon}t^{-\epsilon}.
 \end{align*}
 Since $\det A = h^{n/2}$ up to a universal constants and $d > ch$ since $u$ is locally Lipschitz, the conclusion follows.
\end{proof}

Let $F_{\gamma} = \{x \in B_{1/2}: \frac{\gamma}{2} \leq \bar{h}(x) < \gamma\}.$

\begin{lem}\label{LargeSectionEstimate}
 Let $u \in D_{n,\lambda,\Lambda,K}$. Then there is some $C$ universal and $\epsilon(n,\lambda,\Lambda)$ such that
 $$\int_{F_{\gamma} \cap \{\Delta u > t\}} \Delta u \, dx < C\gamma^{-\epsilon}t^{-\epsilon}$$
\end{lem}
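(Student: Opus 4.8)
The plan is to deduce Lemma \ref{LargeSectionEstimate} from Lemma \ref{RescaledW21Epsilon} by a covering argument on the set $F_\gamma$. The point is that $F_\gamma$ consists of points whose maximal section has height comparable to $\gamma$, so for each $x \in F_\gamma$ we have a compactly contained section $S_{\gamma/2}(x) \subset\subset B_1$, and Lemma \ref{RescaledW21Epsilon} controls $\int_{S_{\gamma/4}(x) \cap \{\Delta u > t\}} \Delta u$ by $C\gamma^{n/2-1-\epsilon}t^{-\epsilon}$. We then need to cover $F_\gamma$ efficiently by such sections so that the overlaps are controlled.

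First I would apply the Besicovitch-type covering property, Lemma \ref{EngulfingProps}(2): associating to each $x \in F_\gamma$ the section $S_{\gamma/2}(x) \subset\subset B_1$, we extract a (now countable, since $F_\gamma$ need not be compact --- one covers an exhausting sequence of compact subsets, or notes $\overline{F_\gamma}$ still has each point in a slightly larger compactly contained section) subcollection $\{S_{\gamma/2}(x_i)\}$ with $F_\gamma \subset \bigcup_i S_{\gamma/2}(x_i)$ and the dilates $S_{\delta\gamma/2}(x_i)$ pairwise disjoint. Next I would bound the number of sections in this subcollection. By Lemma \ref{Balancing}, each $S_{\delta\gamma/2}(x_i)$ contains an ellipsoid of volume comparable to $(\delta\gamma/2)^{n/2}$, hence $|S_{\delta\gamma/2}(x_i)| \geq c\gamma^{n/2}$; since the $x_i \in B_{1/2}$ and these sections are disjoint and contained in a fixed ball, we get $M \leq C\gamma^{-n/2}$.

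Then summing Lemma \ref{RescaledW21Epsilon} over the subcollection --- noting that each $S_{\gamma/2}(x_i)$ is comparable to $S_{(\gamma/2)/2}(x_i)$ up to adjusting $h$ by a universal factor, or simply applying the lemma with $h = \gamma$ so that $S_{h/2}(x_i) = S_{\gamma/2}(x_i) \supset$ the covering piece --- yields
\begin{align*}
 \int_{F_\gamma \cap \{\Delta u > t\}} \Delta u \, dx &\leq \sum_{i=1}^M \int_{S_{\gamma/2}(x_i) \cap \{\Delta u > t\}} \Delta u \, dx \\
 &\leq M \cdot C\gamma^{n/2-1-\epsilon} t^{-\epsilon} \\
 &\leq C\gamma^{-n/2} \cdot \gamma^{n/2-1-\epsilon} t^{-\epsilon} = C\gamma^{-1-\epsilon}t^{-\epsilon}.
\end{align*}
This gives the claimed bound (with exponent $1+\epsilon$ on $\gamma$, which one absorbs into a relabeled $\epsilon$, or one simply states the result with $\gamma^{-1-\epsilon}$; in any case the statement as written with a single $\epsilon$ follows after renaming). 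The main obstacle I anticipate is the bookkeeping around the non-compactness of $F_\gamma$ and making sure the section heights used in the covering lemma, in the volume lower bound, and in the rescaled $W^{2,1+\epsilon}$ estimate are all consistently comparable to $\gamma$ --- each requires the section to be compactly contained in $B_1$, which holds precisely because $\bar h(x) \geq \gamma/2$ on $F_\gamma$, but one should be slightly careful that dilating to a genuinely compactly contained (closed) section costs only a universal factor. The geometric inputs (John's lemma, balancing, engulfing/covering) do all the real work; the argument is a standard Vitali-style summation.
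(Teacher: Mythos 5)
Your overall strategy---cover $F_\gamma$ by sections of height comparable to $\gamma$ via Lemma~\ref{EngulfingProps}(2), apply Lemma~\ref{RescaledW21Epsilon} on each, and bound the number of sections $M_\gamma$---is the right skeleton and matches the paper's approach. The gap is in the count of $M_\gamma$, and it is not a bookkeeping issue: it is fatal.

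You bound $M_\gamma$ by observing that the disjoint dilates $S_{\delta\gamma/2}(x_i)$ each have volume $\geq c\gamma^{n/2}$ (by balancing) and sit inside $B_1$, giving $M_\gamma \leq C\gamma^{-n/2}$ and hence the final bound $C\gamma^{-1-\epsilon}t^{-\epsilon}$. You then claim the extra factor $\gamma^{-1}$ ``can be absorbed into a relabeled $\epsilon$.'' It cannot. The exponents on $\gamma$ and on $t$ are \emph{coupled} through the downstream argument: in the proof of Theorem~\ref{Main} one sums Lemma~\ref{LargeSectionEstimate} over dyadic levels $\gamma = 2^{-i}$ for $i$ up to $k_0 \sim \frac{1}{2}\log_2 t$, producing roughly $t^{\epsilon_1/2 - \epsilon_2}$ when the lemma reads $\gamma^{-\epsilon_1}t^{-\epsilon_2}$. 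This decays only if $\epsilon_1 < 2\epsilon_2$. With your bound $\epsilon_1 = 1+\epsilon$ and $\epsilon_2 = \epsilon$ (where $\epsilon$ is the small DFS exponent, certainly $< 1$), the sum \emph{diverges} in $t$, and the estimate is useless. One cannot shrink $\epsilon_1$ independently because both exponents come from the same DFS exponent; and one cannot enlarge $\epsilon_2$ at all.

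Moreover, your volume count $M_\gamma \leq C\gamma^{-n/2}$ is essentially sharp as a volume count, so no refinement of the packing argument along these lines will recover the lost factor of $\gamma$. What the paper does instead is exploit the geometry of \emph{maximal} sections: since $S_{\bar h(x_i)}(x_i)$ touches $\partial B_1$ while $x_i \in B_{1/2}$, the section $S_{\delta^2\bar h_i}(x_i)$ is long (a fixed universal length) in one direction and therefore thin (total cross-sectional $\mathcal H^{n-1}$-measure $\lesssim \gamma^{n/2}$) in the complementary directions. Applying Lemma~\ref{Alexandrov} (Alexandrov's estimate) on the $(n-1)$-dimensional slices perpendicular to the long axis and then integrating the $r/2$-neighborhood bound for $\nabla v$, $v = u + \frac{1}{2}|x|^2$, the paper shows $Mv(2S_{\delta^2\bar h_i}(x_i)) \geq c\gamma^{n/2-1}$, which is a factor $\gamma^{-1}$ better than the Lebesgue volume of the section. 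Since the dilates are disjoint and $Mv(B_1) \leq C$, this yields $M_\gamma \leq C\gamma^{-(n/2-1)}$ and the final bound $C\gamma^{-\epsilon}t^{-\epsilon}$. That extra factor of $\gamma$ is precisely what you are missing, and the slice-and-Alexandrov argument to extract it is the real content of this lemma's proof.
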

\begin{proof}
 By Lemma \ref{EngulfingProps} we can take a cover of $F_{\gamma}$ by sections $\{S_{\bar{h_i}(x_i)/2}(x_i)\}_{i=1}^{M_{\gamma}}$ with $x_i \in F_{\gamma}$ and $S_{\delta\bar{h_i}(x_i)}(x_i)$ disjoint
 for some universal $\delta$. Then
 $$\int_{F_{\gamma} \cap \{\Delta u > t\}} \Delta u \, dx \leq CM_{\gamma}\gamma^{n/2-1-\epsilon}t^{-\epsilon}$$
 by Lemma \ref{RescaledW21Epsilon}. We need to estimate the number of sections $M_{\gamma}$ in our Vitali cover of $F_{\gamma}$.

 Take $x \in F_{\gamma}$ and consider $S_{\bar{h}(x)}(x)$, which touches $\partial B_1$. By translation and subtracting a linear function assume that $x = 0$
 and $u|_{\partial S_{\delta^2\bar{h}(0)}(0)} = 0$. By rotating and applying Lemma \ref{Balancing} 
 assume that $S_{\delta^2\bar{h}(0)}(0)$ contains the line segment from $-ce_n$ to $ce_n$, with $c$ universal.

 Let $w_t$ be the restriction of $u$ to $\{x_n = t\}$ and let 
 $$S^{w_t} = S_{\delta^2\bar{h}(0)}(0) \cap \{x_n = t\}$$
 be the slice of $S_{\delta^2\bar{h}(0)}(0)$ at $x_n = t$. Since $|S_{\delta^2\bar{h}(0)}(0)| \leq C\gamma^{n/2}$ and this section has length $2c$ in the
 $e_n$ direction, it follows from convexity that
 $$|S^{w_t}|_{\mathcal{H}^{n-1}} \leq C\gamma^{n/2}.$$
 By convexity, $u(te_n) < -\delta^2\bar{h}(0)/2$ for $-c/2 \leq t \leq c/2$. Applying Lemma \ref{Alexandrov}, we conclude that for $t \in [-c/2,c/2]$,
 $$Mw_t(S^{w_t}) > c\gamma^{n/2-1}.$$

 Let $r$ be the distance between $\partial S_{\delta^2\bar{h}(0)}(0)$ and $\partial (2S_{\delta^2\bar{h}(0)}(0))$. Divide $2S_{\delta^2\bar{h}(0)}(0)$ into
 the slices
 $$S_k = 2S_{\delta^2\bar{h}(0)}(0) \cap \{kr < x_n < (k+1)r\}$$
 for $k = -\frac{c}{2r}$ to $\frac{c}{2r}$. Let $v = u + \frac{1}{2}|x|^2$. 
 Then $\nabla v(S_k)$ contains a ball of radius $r/2$ around each point in $\nabla v(S^{w_{(k+1/2)r}})$ (see Figure \ref{Lemma35Pic}), so
 $$Mv(S_k) \geq crMv(S^{w_{(k+1/2)r}}) \geq cr\gamma^{n/2-1}.$$

 Summing from $k = -\frac{c}{2r}$ to $\frac{c}{2r}$ we obtain that 
 $$|\nabla v(2S_{\delta^2\bar{h}(0)}(0))| \geq c\gamma^{n/2-1}.$$
 Using that $2S_{\delta^2 \bar{h}_i}(x_i) \subset S_{\delta \bar{h}_i}(x_i)$ are disjoint and summing over $i$ we obtain that
 $$M_{\gamma} \gamma^{n/2-1} < C$$
 and the conclusion follows.

\begin{figure}
 \centering
    \includegraphics[scale=0.35]{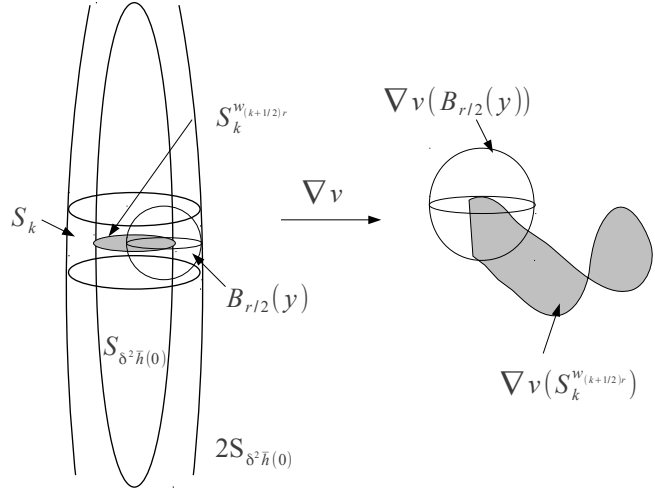}
 \caption{$\nabla v(S_k)$ contains an $r/2$-neighborhood of the surface $\nabla v(S^{w_{(k+1/2)r}})$, which projects in the $x_n$ direction to a set of 
 $\mathcal{H}^{n-1}$ measure at least $c\gamma^{n/2-1}$.}
 \label{Lemma35Pic}
\end{figure}

\end{proof}

\begin{proof}[\textbf{Proof of Theorem \ref{Main}}]
We first consider the set where $\bar{h}(x) \leq \frac{1}{t^{1/2}}$. At any point in this set, by Proposition \ref{MongeAmpereMass}, we can find
some $r > 0$ such that
$|\log r| > c|\log t|^{1/2}$
and
$$Mv(B_r(x)) > cr^{n-1}(\log t)^{\eta/2}.$$
We conclude that
$$\int_{B_r(x)} \Delta u \, dx \leq Cr^{n-1} \leq \frac{C}{(\log t)^{\eta/2}}Mv(B_r(x)).$$

Covering $\{\Delta u > t\} \cap \{\bar{h}(x) \leq \frac{1}{t^{1/2}}\}$ with these balls and taking a Vitali subcover $\{B_{r_i}(x_i)\}$, we obtain that
$$\int_{\{\Delta u > t\} \cap \{\bar{h}(x) < \frac{1}{t^{1/2}}\}} \Delta u \, dx \leq \frac{C}{(\log t)^{\eta/2}} \sum_{i} Mv(B_{r_i}(x_i)) \leq \frac{C}{(\log t)^{\eta/2}},$$
giving the desired bound over the ``near-singular'' points.

We now study the integral of $\Delta u$ over the remaining subset of $\{\Delta u > t\}$. Take $k_0$ so that 
$$2^{k_0-1} \leq t^{1/2} < 2^{k_0}.$$ 
Applying Lemma \ref{LargeSectionEstimate} we obtain that
\begin{align*}
 \int_{\{\Delta u > t\} \cap \{\bar{h}(x) > \frac{1}{t^{1/2}}\}} \Delta u \, dx &\leq \sum_{i=0}^{k_0} \int_{\{\Delta u > t\} \cap F_{2^{-i}}} \Delta u \, dx \\
 &\leq Ct^{-\epsilon}\sum_{i=1}^{k_0} 2^{\epsilon i} \\
 &\leq Ct^{-\epsilon/2},
\end{align*}
giving the desired bound.
\end{proof}

\section{Quantitative Behavior of Maximal Sections}
In this section we closely examine the geometric properties of maximal sections of solutions in $D_{n,\lambda,\Lambda,K}$ to prove Proposition
\ref{MongeAmpereMass}.

Let $u \in D_{n,\lambda,\Lambda,K}$ and fix $x \in B_{1/2}$. Then for any $h > \bar{h}(x)$, $S_{h}(x)$ is not compactly contained in $\partial B_1$. 
If $\bar{h}(x) > 0$, then by Lemma \ref{Balancing}, $S_{\bar{h}(x)}(x)$ contains an ellipsoid $E$ centered at $x$ with a long axis of universal length $2c$.

If $\bar{h}(x) = 0$ and $L$ is the tangent
to $u$ at $x$ then it is a consequence of lemma \ref{Balancing} (see \cite{C1}) that $\{u = L\}$ has no extremal points, and in particular for any 
$h > 0$ we know $S_h(x)$ contains a line segment (independent of $h$) exiting $\partial B_1$ at both ends.

By translating and subtracting a linear function assume that $x = 0$ and $\nabla u(0) = 0$. By rotating assume that $S_h(0)$ contains the line segment from $-ce_n$ to $ce_n$
for all $h > \bar{h}(0)$. For the rest of the section denote $\bar{h}(0)$ by just $\bar{h}$.

Let $w$ be the restriction of $u$ to $\{x_n = 0\}$ with sections $S_h^w$. Since $|S_{h}(0)| < Ch^{n/2}$ for all $h$ and $S_{\bar{h}}(0)$ contains a line segment of universal length
in the $e_n$ direction, we have
$$|S_{h}^w(0)|_{\mathcal{H}^{n-1}} < Ch^{n/2}$$
for $h \geq \bar{h}$.
In the following analysis we need to focus on those sections of $w$ with the same volume bound. The following property is sufficient:

\textbf{Property $F$:} We say $S_h^w(y)$ satisfies property $F$ if
$$w(y) + \nabla w(y) \cdot (-y) + h \geq \bar{h}.$$
(See Figure \ref{PropF}).

\begin{figure}
 \centering
    \includegraphics[scale=0.35]{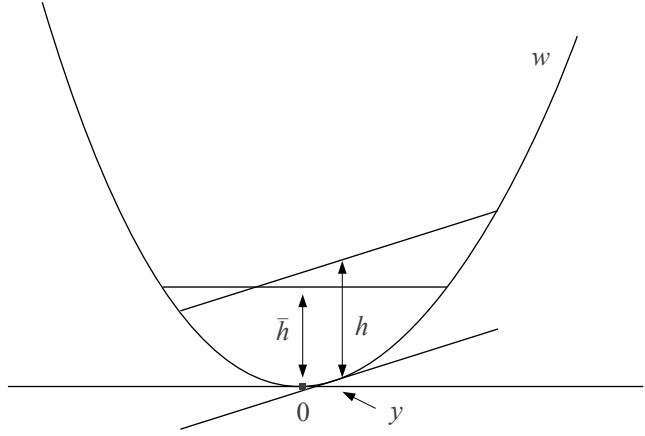}
 \caption{$S_h^w(y)$ satisfies property $F$ if the tangent plane at $y$, lifted by $h$, lies above $\bar{h}$ at $0$.}
 \label{PropF}
\end{figure}

\begin{lem}\label{TiltedVolume}
 If $S_h^w(y)$ satisfies property $F$ then
 $$|S_h^w(y)| < Ch^{n/2}.$$
\end{lem}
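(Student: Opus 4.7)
The plan is to realize $S_h^w(y)$ as the hyperplane slice at $\{x_n = 0\}$ of an $n$-dimensional section of $u$, apply Lemma \ref{SectionGrowth} in $\mathbb{R}^n$, and then descend to the slice via Brunn's theorem, once one has shown that this ambient section has universal extent in the $e_n$-direction.

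First I would pick $q_y$ so that the affine function $L_y(x) = w(y) + \nabla w(y) \cdot (x - y) + q_y x_n$ is a supporting plane to $u$ at $y$, and observe that the full $n$-dimensional section $S_h(y) = \{u < L_y + h\}$ restricts on $\{x_n = 0\}$ to exactly $S_h^w(y)$. Lemma \ref{SectionGrowth} then gives $|S_h(y)| < Ch^{n/2}$, so the task reduces to bounding the $(n-1)$-dimensional measure of the hyperplane slice of $S_h(y)$ at $\{x_n = 0\}$ by a multiple of its $n$-dimensional volume.

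The main step is to exploit property $F$, which is precisely the inequality $L_y(0) \geq \bar{h} - h$, together with the universal segment $[-ce_n, ce_n]$ contained in $S_{\bar{h}}(0)$ (or in every $S_h(0)$ when $\bar{h} = 0$). On that segment one has $u \leq \bar{h}$, while $L_y$ is affine, and a short computation gives
$$u(t e_n) - L_y(t e_n) \leq h - q_y t \quad \text{for } t \in [-c, c].$$
Consequently, for $t$ having the same sign as $q_y$ (any sign when $q_y = 0$), $te_n \in \overline{S_h(y)}$, so the $x_n$-projection of $S_h(y)$ contains at least one of the intervals $[0, c]$ or $[-c, 0]$, and in any case has length at least $c$.

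To conclude, by Brunn's theorem the function $t \mapsto |S_h(y) \cap \{x_n = t\}|_{\mathcal{H}^{n-1}}^{1/(n-1)}$ is concave on the $x_n$-range $[t_-, t_+]$, which has length $\geq c$. A standard inscribed-triangle comparison and a one-variable integration give $\max_t |S_h(y) \cap \{x_n = t\}|_{\mathcal{H}^{n-1}} \leq n|S_h(y)|/(t_+ - t_-) \leq (n/c) C h^{n/2}$, and this max majorizes the slice at $t = 0$, which is $S_h^w(y)$. The main obstacle is that the $e_n$-slope $q_y$ of the supporting plane is a priori uncontrolled in size and sign; property $F$ sidesteps this by forcing the desired $e_n$-segment on whichever side of the origin matches the sign of $q_y$, giving a universal lower bound on $t_+ - t_-$ independently of $q_y$.
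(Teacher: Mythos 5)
Your proof is correct and follows the paper's argument: property $F$ forces the lifted supporting plane to stay above $\bar h$ on one half of the universal segment $[-ce_n,ce_n]$ where $u\leq\bar h$, so the ambient section $S_h(y)$ has extent $\geq c$ in the $e_n$-direction, and Lemma \ref{SectionGrowth} then bounds the slice at $\{x_n=0\}$. The only cosmetic difference is that you pass from the volume bound to the slice bound via Brunn's theorem, where the paper (implicitly) uses the cone over the slice with apex at $\pm ce_n$; both yield the same universal constant up to dimension.
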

\begin{proof}
 The plane $u(y) + \nabla u(y) \cdot (z-y) + h$ is greater than $\bar{h}$ along $z=te_n$ for either $t > 0$ or $t < 0$. Since $u < \bar{h}$ on the segment from
 $-ce_n$ to $ce_n$, it follows that $S_h(y)$ contains the line segment from $0$ to $ce_n$ or $-ce_n$. Since $|S_h(y)| < Ch^{n/2}$ the conclusion follows.
\end{proof}

The first key lemma says that $w$ grows logarithmically faster than quadratic in at least two directions at a level
comparable to $\bar{h}$. Let 
$$d_1^y(h) \geq d_2^y(h) \geq ... \geq d_{n-1}^y(h)$$
denote the axis lengths of the John ellisoid for $S_h^w(y)$.

\begin{lem}\label{TwoBehaviors}
 For any $h > \bar{h}$ there exist $\epsilon(n)$, $C_0$ universal, $h_0 < e^{-|\log h|^{1/2}}$ and $y$ such that $S_{h_0}^w(y)$ satisfies property $F$ and
 $$d_{n-2}^y(h_0) < C_0h_0^{1/2}|\log h_0|^{-\epsilon}.$$
\end{lem}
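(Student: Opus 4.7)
The plan is a scale-by-scale descent along sub-sections of $w$, driven by a dichotomy coming from the volume bound of Lemma \ref{TiltedVolume}. Keep the normalizations $x = 0$, $\nabla u(0) = 0$, $u(0) = 0$, so $w$ attains its minimum $0$ at the origin. For any section $S_{h_k}^w(y_k)$ satisfying property $F$, John's lemma together with Lemma \ref{TiltedVolume} gives
\[
\prod_{i=1}^{n-1} d_i^{y_k}(h_k) \leq C\,h_k^{n/2}.
\]
Since the $d_i$ are decreasing in $i$, assuming $d_{n-2}^{y_k}(h_k) \geq C_0 h_k^{1/2}|\log h_k|^{-\epsilon}$ forces
\[
d_{n-1}^{y_k}(h_k) \leq C\, h_k\, |\log h_k|^{(n-2)\epsilon},
\]
a ``super-thin'' shortest direction lying well below the natural scale $h_k^{1/2}$. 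The dichotomy is thus: either we already have the conclusion at scale $h_k$, or the axis $d_{n-1}$ at this center is very short.

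I would then set up a stopping-time iteration. Starting from $(y_0, h_0) := (0, h)$, at step $k$ test whether $d_{n-2}^{y_k}(h_k) < C_0 h_k^{1/2}|\log h_k|^{-\epsilon}$; if yes, stop. Otherwise descend to scale $h_{k+1} := \delta h_k$, with $\delta$ the engulfing constant from Lemma \ref{EngulfingProps}, and choose $y_{k+1}$ as the center of a Vitali sub-section of $S_{h_k}^w(y_k)$ still satisfying property $F$ (using the covering part of Lemma \ref{EngulfingProps} pulled back through the horizontal slice). A pigeonhole over this subcover, combined with the volume bound at scale $h_{k+1}$ and the Alexandrov lower bound of Lemma \ref{Alexandrov} applied to $w$ on each sub-section, selects one in which the normalized $(n-2)$-th axis $d_{n-2}/h_k^{1/2}$ contracts by a factor $(1 - \eta/|\log h_k|)$ for some universal $\eta > 0$. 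After $N \asymp |\log h|^{1/2}$ iterations one has $h_N < e^{-|\log h|^{1/2}}$, and the accumulated product $\prod_{k < N}(1 - \eta/|\log h_k|) \sim |\log h_N|^{-\eta}$ gives the desired bound with $\epsilon = \eta$.

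The main obstacle is the per-step geometric claim that the contraction really falls on $d_{n-2}$ rather than on the already-super-thin $d_{n-1}$. The intuition is that once $d_{n-1}$ has been pinned at order $h_k|\log h_k|^{O(\epsilon)}$, the residual ``volume deficit'' in the Vitali subcover cannot be taken up there and must be absorbed in $d_{n-2}$; making this precise requires careful tracking of how the John ellipsoids of sub-sections embed in the parent ellipsoid, and a verification that property $F$ propagates to the chosen sub-centers $y_{k+1}$ via the tangent-plane inequality along the $\pm c e_n$ segment. A secondary subtlety is controlling the accumulation of universal constants so that the iteration can survive $\asymp |\log h|^{1/2}$ steps without eroding $C_0$ or $\delta$.
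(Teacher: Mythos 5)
Your proposal has the right flavor---a scale-by-scale iteration that accumulates a gain of size $\eta/|\log h_k|$ per step---but it has genuine gaps, and in particular the one step you flag as ``the main obstacle'' is precisely where all the work of the lemma lies, and your proposed mechanism for it does not match what can actually be proved. The paper argues by contradiction: assuming $d_{n-2}^y(h') > C_0 (h')^{1/2}|\log h'|^{-\epsilon}$ for \emph{all} sub-$h_0$ sections satisfying property $F$, it deduces (this is the delicate Step 1) that the \emph{breadth} $b(h')$ of $S_{h'}^w(0)$ decays too slowly: $b(h'/2) > (\tfrac12 + C_1/|\log h'|)\,b(h')$. This is obtained not by a pigeonhole over a Vitali cover but by normalizing with the John ellipsoid, producing an auxiliary section at a point $\tilde y$ on the segment toward the extremal tangent point, bounding from below the height $h^*$ at which that section satisfies property $F$ (using the contradiction hypothesis on $(n-3)$-dimensional slices together with the volume bound of Lemma \ref{TiltedVolume}), and converting this into a quantitative modulus of continuity for $\nabla w$. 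The conclusion then comes out backwards relative to your plan: one shows $d_{n-1}$ stays too \emph{large} over many scales, and the volume bound $\prod_i d_i \leq Ch^{n/2}$ then forces $d_{n-2}$ to be small at the bottom scale. There is no per-step ``contraction of $d_{n-2}/h_k^{1/2}$,'' and I do not see how your covering/pigeonhole heuristic would produce one.

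Two further structural problems would sink the iteration even granting the per-step claim. First, the arithmetic: with $h_{k+1}=\delta h_k$ and only $N\asymp|\log h|^{1/2}$ steps, the quantity $|\log h_k|$ changes only additively by $O(|\log h|^{1/2})$, so $\prod_{k<N}(1-\eta/|\log h_k|) = 1 - O(|\log h|^{-1/2})$, nowhere near $|\log h_N|^{-\eta}$. To accumulate a genuine power of the logarithm the scales must range \emph{multiplicatively} in $|\log h_k|$; the paper runs the iteration over heights from $e^{-|\log\bar h|^{1/2}}$ down to $\bar h|\log\bar h|$, i.e.\ roughly $|\log\bar h|$ dyadic steps with $\sum_k 1/|\log h_k| \sim \log\bigl(|\log\bar h|/|\log\bar h|^{1/2}\bigr)$, which exponentiates to the needed factor $|\log\bar h|^{C_1/2}$. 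Second, your descent cannot go below $\bar h$ while preserving property $F$: since $w\geq w(0)=0$ and any tangent plane of $w$ at $y$ lies below $w(0)$ at the origin, property $F$ forces the section height to be at least $\bar h$; starting at $h$ just above $\bar h$ and descending geometrically violates property $F$ after one step. The paper avoids this by centering the iterated sections at $0$ (where property $F$ is automatic for heights $\geq\bar h$), starting the iteration high, and stopping at $\bar h|\log\bar h|$, extracting the conclusion at height $\bar h$ by one final convexity step.
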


The next lemma says that if $w$ grows logarithmically faster than quadratic in at least two directions up to height $h$
then the Monge-Amp\`{e}re mass of $u + \frac{1}{2}|x|^2$ is logarithmically larger than the mass of $\Delta u$ in a ball with radius comparable to $h^{1/2}$. 

\begin{lem}\label{Induction}
 Fix $\epsilon > 0$ and assume that for some $h > 0$, $S_h^w(y)$ satisfies property $F$. Then there exist $\eta_1,\eta_2(n,\epsilon)$ and $C$ depending on universal constants and $\epsilon$ such that if
 $$d_{n-2}^y(h) < h^{1/2}|\log h|^{-\epsilon}$$
 then for some $r < Ch^{1/2}|\log h|^{-\eta_1}$ we have
 $$M\left(u+\frac{1}{2}|x|^2\right)(B_r(0)) > C^{-1}r^{n-1}|\log r|^{\eta_2}.$$
\end{lem}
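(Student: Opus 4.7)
The plan is to combine the slicing approach of Lemma \ref{LargeSectionEstimate} with the quantitative 2D thinness hypothesis. First, property $F$ places $0$ inside $S_h^w(y)$: the tangent plane $L$ of $u$ at $y$ satisfies $L(0) + h \geq \bar h > 0 = u(0)$, so $0 \in S_h(y) \cap \{x_n = 0\} = S_h^w(y)$. Combined with the thinness $d_{n-2}^y(h) \leq h^{1/2}|\log h|^{-\epsilon}$, the origin and $y$ lie within $Ch^{1/2}|\log h|^{-\epsilon}$ of each other along the two smallest axes of the John ellipsoid of $S_h^w(y)$. Rotate coordinates so these two axes are $e_1, e_2$, and let $Q = \mathrm{span}(e_1, e_2)$.

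Next, I would apply Alexandrov's estimate (Lemma \ref{Alexandrov}) to $u$ restricted to the 2-plane $Q_y = y + Q$. The 2D section $S_h(y) \cap Q_y$ has area $\leq h|\log h|^{-2\epsilon}$ (product of the two small axes) and height $h$, so the 2D Monge-Amp\`ere mass of $u|_{Q_y}$ on it is $\geq ch|\log h|^{2\epsilon}$. Pointwise, Cauchy interlacing combined with $\det(D^2 u + I) = \prod(\mu_i + 1) \geq \mu_1 \mu_2$ gives $\det(D^2 u + I) \geq \det(D^2_Q u)$, where $D^2_Q u$ denotes the $2 \times 2$ principal minor of $D^2 u$ in the $Q$-directions.

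Then I would propagate the thin-section structure to parallel 2-planes through nearby points $x_{Q^\perp} \in Q^\perp$, using convexity, property $F$, and the fact that $S_h(y)$ contains a segment of universal length $\sim c$ along $e_n$ (so that the thin structure persists all along this segment). Integrating by Fubini,
\[
Mv(B_r(0)) \geq \int_{B_r(0) \cap Q^\perp} M(u|_{Q + x_{Q^\perp}})\bigl(S_h(y) \cap (Q + x_{Q^\perp}) \cap B_r(0)\bigr) \, dx_{Q^\perp},
\]
and choosing $r$ just below $h^{1/2}|\log h|^{-\eta_1}$, the $\sim c$-range of propagation along $e_n$ combined with the gradient shift $\nabla v = \nabla u + x$ (as in Lemma \ref{LargeSectionEstimate}) yields the desired lower bound $cr^{n-1}|\log r|^{\eta_2}$ for suitable $\eta_1, \eta_2(n,\epsilon)$.

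The main obstacle will be the quantitative propagation: verifying that the 2D thinness at $y$ transfers to nearby parallel 2-planes while preserving the $|\log h|^{2\epsilon}$ boost, and that this transfer extends over a range giving the full $r^{n-1}$ scaling rather than merely $r^{n-2}$. This will require careful use of the engulfing and balancing lemmas (Lemmas \ref{EngulfingProps} and \ref{Balancing}) to control the geometry of sections under translation, as well as an argument exploiting the flat-segment structure of $S_h(0)$ along $e_n$, which ensures that the slicing restrictions $u_t = u|_{\{x_n = t\}}$ for $t$ in the segment inherit a similar thinness to $u_0 = w$.
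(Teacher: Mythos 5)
Your overall strategy --- slice along the thin directions, apply a lower-dimensional Alexandrov estimate, and recover the transverse directions through the quadratic term in $v = u + \frac{1}{2}|x|^2$ --- is indeed the paper's strategy. But as written the argument falls short of the stated bound by a full factor of $r$, and the fix you propose cannot close the gap.

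Concretely: your area bound for the $2$-dimensional slice is $d_{n-2}^y(h)\,d_{n-1}^y(h) \leq h|\log h|^{-2\epsilon}$, so the two-dimensional Alexandrov estimate gives a per-slice mass of only $c\,h^2/(h|\log h|^{-2\epsilon}) = c\,h|\log h|^{2\epsilon} \approx r^2|\log r|^{\cdots}$. Integrating over the transverse parameters in $B_r(0)\cap Q^\perp$, a set of measure $\approx r^{n-2}$, yields $Mv(B_r(0)) \geq c\,r^{n}|\log r|^{\cdots}$, which is weaker than the required $r^{n-1}|\log r|^{\eta_2}$ since $r\to 0$. Your suggested remedy --- propagating the thin slices over a segment of \emph{universal} length along $e_n$ --- cannot repair this, because the conclusion demands that the mass sit inside the small ball $B_r(0)$, and points at distance $\sim c \gg r$ along $e_n$ lie outside it; propagation over a long tube is precisely the mechanism of Lemma \ref{LargeSectionEstimate}, which proves a different, non-localized statement. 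A second, related problem is localization of the slice itself: you slice through $y$, which may sit at distance comparable to $d_1^y(h) = O(1)$ from the origin in the large-axis directions, so the mass of that slice need not be seen by $B_r(0)$ at all.

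The missing idea is the induction on how many axes are small, combined with the total volume bound $|S_h^w(y)| \leq Ch^{n/2}$ that property $F$ provides (Lemma \ref{TiltedVolume}). The paper distinguishes: either all axes satisfy $d_1^y(h) < h^{1/2}|\log h|^{-\alpha_1}$, in which case one applies the $(n-1)$-dimensional Alexandrov estimate to the whole section of $w$ and gains a single factor $r$ from the $e_n$ direction; or there is a first index $k$ with $d_k^y(h)$ small while $d_i^y(h) > h^{1/2}|\log h|^{-\alpha_i}$ for $i<k$. In the latter case the slice of $S_{2h}^w(y)$ through the \emph{origin} along $e_k,\dots,e_{n-1}$ (one passes to $2h$ so that property $F$ still guarantees depth at least $h$ at $0$) has $(n-k)$-volume at most $Ch^{n/2}/(d_1\cdots d_{k-1}) \leq Ch^{(n-k+1)/2}|\log h|^{\alpha_1+\cdots+\alpha_{k-1}}$ --- a factor $h^{1/2}$ smaller than the naive product of the small axes --- so Alexandrov gives per-slice mass at least $ch^{(n-k-1)/2}|\log h|^{-(\alpha_1+\cdots+\alpha_{k-1})}$, and multiplying by $r^{k}$ from the $k$ transverse directions $e_1,\dots,e_{k-1},e_n$ recovers exactly $r^{n-1}$ with a positive power of $|\log r|$ once the $\alpha_i$ are chosen suitably. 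Without the lower bounds on the large axes and the volume bound coming from property $F$, the extra factor of $h^{1/2}\approx r$ is unavailable, and that is precisely the factor your argument is missing.
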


These lemmas combine to give the key proposition:
\begin{proof}[\textbf{Proof of Proposision \ref{MongeAmpereMass}:}]
 By Lemma \ref{TwoBehaviors}, there is some $S_h(y)$ satisfying property $F$ with 
 $$d_{n-2}^y(h) < C_0h^{1/2}|\log h|^{-\epsilon},$$
 with $\epsilon(n)$, $C_0$ universal and $h < e^{-|\log(\delta + \bar{h}(x))|^{1/2}}$ for any $\delta$. The conclusion follows from Lemma \ref{Induction}.
\end{proof}

We now turn to the proofs of Lemmas \ref{TwoBehaviors} and \ref{Induction}.
\begin{proof}[\textbf{Proof of Lemma \ref{TwoBehaviors}}]
 Assume by way of contradiction that for all $h < h_0$ and $S_h^w(y)$ satisfying property $F$ we have
 $$d_{n-2}^y(h) > C_0h^{1/2}|\log h|^{-\epsilon},$$
 for $h_0$ depending on $\bar{h}$ and $C_0,\epsilon$ we will choose later. We divide the proof into two steps.
 
 \textbf{Step 1:} Define the breadth $b(h)$ as the minimum distance between two parallel tangent hyperplanes to $\partial S_h^w(0)$.
 We show that for $\bar{h}|\log \bar{h}| < h < h_0$ we have
 $$b(h/2) > \left(\frac{1}{2} + \frac{C_1}{|\log h|}\right) b(h)$$
 for some $C_1$ large depending on $C_0$.
 Let $x_0$ be the center of mass of $S_h^w(0)$ and rotate so that the John ellipsoid for $S_h^w(0)$ is $A(B_1) + x_0$, where 
 $$A = \text{diag}(d_1^0(h),...,d_{n-1}^0(h)).$$
 Let $P_1,P_2$ be the tangent hyperplanes to $\partial S_{h/2}^w(0)$ a distance $b(h/2)$ apart. Let $x_1,x_2$ be points where $P_1$ and $P_2$ become tangent
 to $\partial S_h^w(0)$ when we slide them out. Assume that the distance between $0$ and the plane tangent at $x_1$ is larger than that between
 $0$ and the plane tangent at $x_2$. (See Figure \ref{Lemma42Pic1}).

\begin{figure}
 \centering
    \includegraphics[scale=0.35]{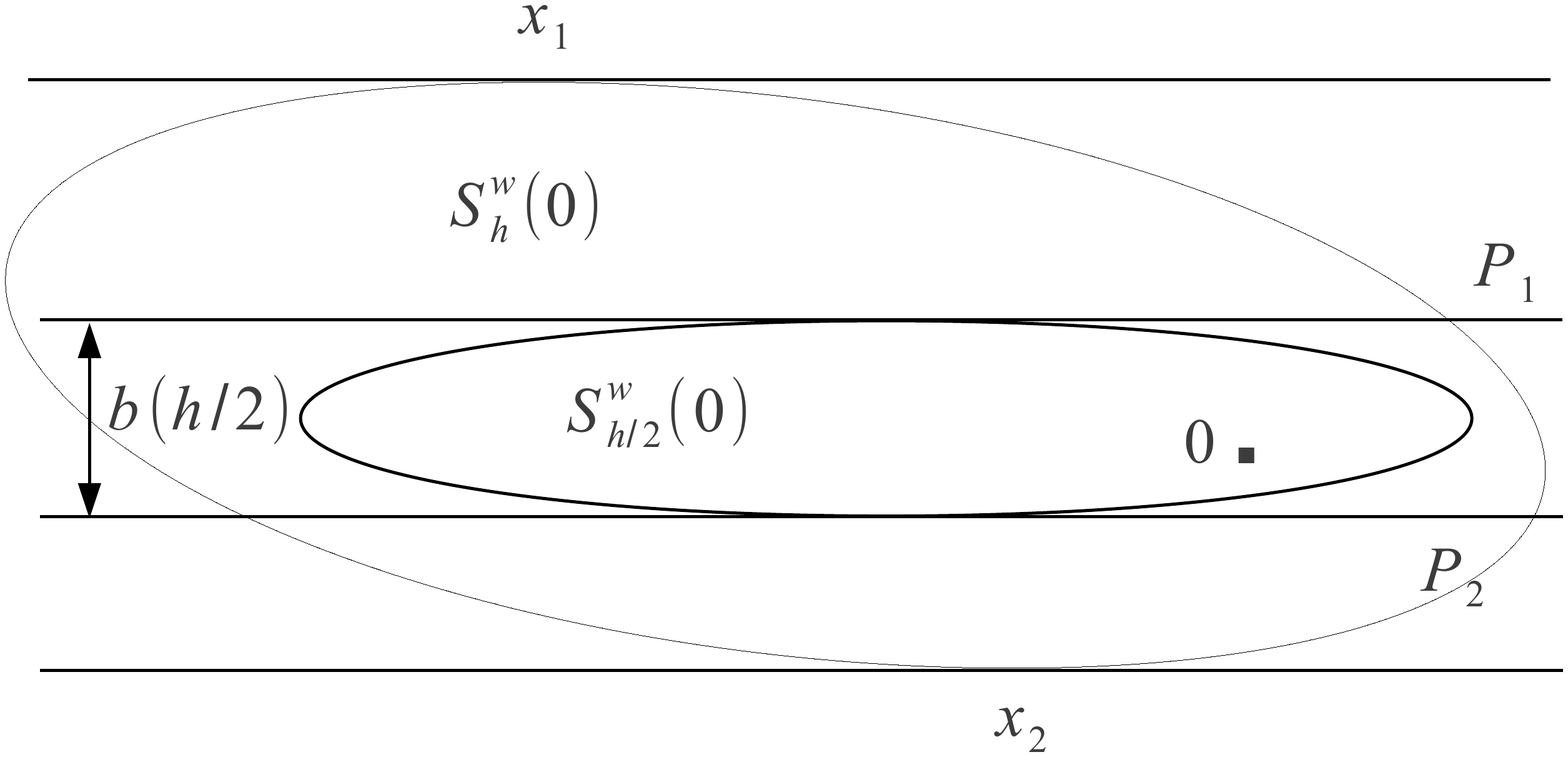}
 \caption{}
 \label{Lemma42Pic1}
\end{figure}

 Let $\tilde{x}_1$ be the image of $x_1$ under $A^{-1}$ and let
 $$\tilde{w}(x) = (\det A)^{-2/n}w(Ax).$$
 Observe that $\tilde{w}$ is the restriction of $\tilde{u}(x) = (\det A)^{-2/n}u(Ax',x_n)$ which solves $\lambda \leq \det D^2u \leq \Lambda$, so that
 sections $S_h^{\tilde{w}}$ of $\tilde{w}$ satisfying property $F$ with $\bar{h}$ replaced by 
 $(\det A)^{-2/n}\bar{h}$ have volume bounded above by $Ch^{n/2}$. Furthermore, since the distance between $0$ and the plane tangent at $x_1$ was larger and
 the images of the tangent planes under $A^{-1}$ are separated by distance at least $2$, we have $|\tilde{x}_1| \geq 1$.
 
 By convexity we can find $\tilde{y}$ on the line segment connecting $0$ to $\tilde{x}_1$ such that
 $$\nabla \tilde{w}(\tilde{y}) \cdot \frac{\tilde{x}_1}{|\tilde{x}_1|} = \frac{H}{|\tilde{x}_1|},$$
 where $H = \det A^{-2/n}h$ is the height of $\tilde{w}$. Let $\tilde{h}$ be the smallest $t$ such that $0 \in S_{t}^{\tilde{w}}(\tilde{y})$. We aim to bound
 $\tilde{h}$ below, which heuristically rules out cone-like behavior in the $\tilde{x}_1$ direction.
 Let
 $$h^* = \tilde{h} + (\det A)^{-2/n}\bar{h}.$$
 We have chosen $h^*$ so that $S_{h^*}^{\tilde{w}}(\tilde{y})$ and $S_{\delta}^w(y) = A(S_{h^*}^{\tilde{w}}(\tilde{y}))$ satisfy property $F$, where
 $\delta = (\det A)^{2/n}h^*$. (See Figure \ref{Lemma42Pic2}). It follows that 
 $$|S_{h^*}^{\tilde{w}}(\tilde{y})| < C(h^*)^{n/2}.$$

\begin{figure}
 \centering
    \includegraphics[scale=0.35]{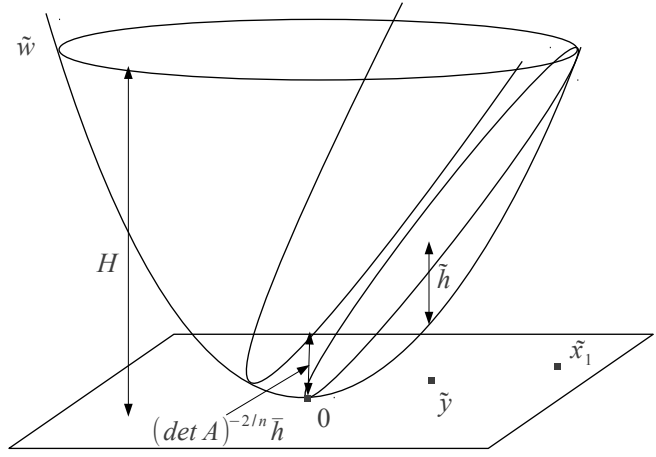}
 \caption{Lifting the tangent plane at $\tilde{y}$ by $h^* = \tilde{h} + \det(A)^{-2/n}\bar{h}$ we obtain a section of $\tilde{w}$ satisfying property $F$.}
 \label{Lemma42Pic2}
\end{figure}

 We now bound the volume of $S_{h^*}^{\tilde{w}}(\tilde{y})$ by below. Since $0,\tilde{x}_1$ are in this section, it has diameter at least $1$.
 Since $\tilde{w}$ has height $H$ it has interior Lipschitz constant $\frac{C}{H}$, so the smallest axis of the John ellipsoid for $S_{h^*}^{\tilde{w}}(\tilde{y})$ has length at least $c\frac{h^*}{H}$.
 We turn to the remaining axes.

 Let $E_y$ be the John ellipsoid for $S_{\delta}^w(y)$. By contradiction hypothesis for any $n-2$ dimensional plane $P$ passing through the center of $E_y$, we can
 find a $n-3$ dimensional plane $P'$ contained in $P$ such that $P' \cap E_y$ is an $n-3$ dimensional ellipsoid with axes $d_{1,P'}^y \geq ... \geq d_{n-3,P'}^y$ satisfying
 $$d_{n-3,P'}^y > C_0\delta^{1/2}|\log \delta|^{-\epsilon}.$$
 Take $P$ such that $A^{-1}(P)$ is perpendicular to the segment connecting $0$ and $\tilde{x}_1$.
 By using the hypothesis and that $w$ is locally Lipschitz we have 
 $$d_{n-2}^0(h)d_{n-1}^0(h) > cC_0h^{3/2}|\log h|^{-\epsilon}.$$
 Since
 $$d_1^0(h)...d_{n-1}^0(h) < Ch^{\frac{n}{2}},$$
 this gives
 $$d_{1}^0(h)...d_{n-3}^0(h) < \frac{C}{C_0}h^{\frac{n-3}{2}}|\log h|^{\epsilon}.$$
 It follows that $A^{-1}$ changes the $n-3$ dimensional volume of $P' \cap E_y$ by a factor of at least
 $$\frac{c(n)}{d_1^0(h)...d_{n-3}^0(h)} \geq cC_0h^{-\frac{n-3}{2}}|\log h|^{-\epsilon}.$$
 Since
 $$\det A > ch^{n/2}|\log h|^{-C(n)\epsilon}$$
 (by the contradiction hypothesis) and $\delta = (\det A)^{2/n}h^*$ we conclude that
 \begin{align*}
  |S_{h^*}^{\tilde{w}}(\tilde{y}) \cap A^{-1}(P')|_{\mathcal{H}^{n-3}} &> C_1\frac{(\delta^{1/2}|\log \delta|^{-\epsilon})^{n-3}}{d_{1}^0(h)...d_{n-3}^0(h)} \\
  &\geq C_1(h^*)^{\frac{n-3}{2}}(\det A)^{\frac{n-3}{n}}h^{-\frac{n-3}{2}}(C|\log h| + |\log h^*|)^{-C(n)\epsilon}
 \end{align*}
 for some large $C_1$ depending on $C_0$.
 We also have
 $$H = h(\det A)^{-2/n} \leq |\log h|^{C(n)\epsilon}.$$ 
 Using that the remaining axes have lengths  at least $1$ and $c\frac{h^*}{H}$ we obtain
 $$|S_{h^*}^{\tilde{w}}(\tilde{y})| > C_1(h^*)^{\frac{n-1}{2}}|\log h|^{-C(n)\epsilon}(C|\log h| + |\log h^*|)^{-C(n)\epsilon}.$$
 Using that $|S_{h^*}^{\tilde{w}}(\tilde{y})| < C(h^*)^{n/2}$ we get a lower bound on $h^*$:
 $$h^* > C_1|\log h|^{-C(n)\epsilon}.$$
 (See Figure \ref{Lemma42Pic3} for the simple case $n=3$.)

\begin{figure}
 \centering
    \includegraphics[scale=0.35]{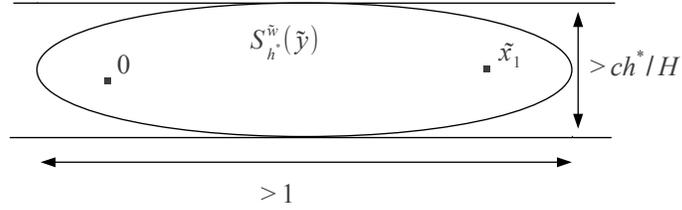}
 \caption{For the case $n=3$, the above figure implies that $|S_{h^*}^{\tilde{w}}(\tilde{y})| > ch^*/H$. This, combined with the volume estimate
 $|S_{h^*}^{\tilde{w}}(\tilde{y})| < C(h^*)^{3/2}$ and the upper bound on $H$ from the contradiction hypothesis give a lower bound
 of $c|\log h|^{-C\epsilon}$ for $h^*$.}
 \label{Lemma42Pic3}
\end{figure}

 Recalling the definition of $h^*$ and using again the lower bound on $\det A$ it follows that
 $$\tilde{h} + C\frac{\bar{h}}{h}|\log h|^{C(n)\epsilon} > C_1|\log h|^{-C(n)\epsilon}.$$
 Taking $\epsilon$ to be small enough that $C(n)\epsilon = 1/2$ and using that $\bar{h}|\log \bar{h}| < h$ we get 
 $$\tilde{h} > C_1|\log h|^{-1/2}.$$
 Finally, let $\left(\frac{1}{2}+\gamma\right)\tilde{x_1}$ be the point where $\tilde{w} = \frac{H}{2}$. 
 It is clear from convexity (see Figure \ref{Lemma42Pic4}) that 
 $$2\gamma H \geq \tilde{h}.$$

\begin{figure}
 \centering
    \includegraphics[scale=0.35]{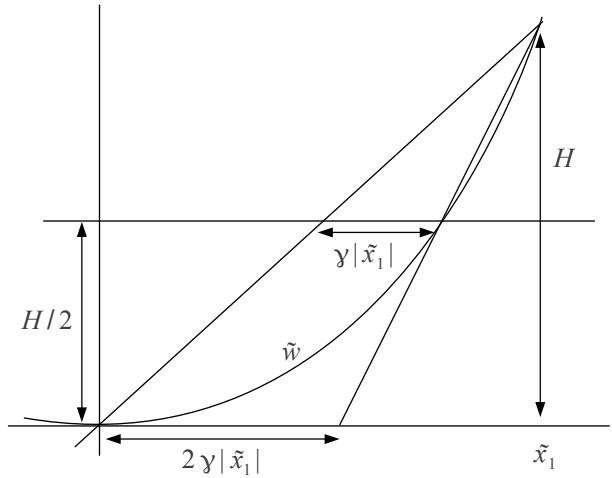}
 \caption{By convexity $2\gamma$ is at least $\tilde{h}/H$, giving a quantitative modulus of continuity for $\nabla w$ near $0$
 which we exploit in Step $2$ to obtain a contradiction.}
 \label{Lemma42Pic4}
\end{figure}

 Recalling that $H < c|\log h|^{C(n)\epsilon} < c|\log h|^{1/2}$, we obtain
 $$\gamma \geq C_1|\log h|^{-1}.$$

 Let $l_1,l_2$ be the distances from $0$ to the translations of $P_1$ and $P_2$ which are tangent to $\partial S_h^w(0)$ so that $b(h) \leq l_1 + l_2$. 
 The previous analysis implies that $P_1$ and $P_2$ have distance at least
 $\left(\frac{1}{2} + \gamma\right)l_1$ and $\frac{1}{2}l_2$ from $0$. Since $l_1 \geq l_2$ it follows that
 $$b(h/2) \geq  \left(\frac{1}{2} + \gamma\right)l_1 + \frac{1}{2}l_2 \geq \left(\frac{1 + \gamma}{2}\right)(l_1 + l_2).$$
 Since $\gamma \geq \frac{C_1}{|\log h|}$, step $1$ is finished.

 \textbf{Step 2:} We iterate step 1 to prove the lemma. First assume that $\bar{h} > 0$ and that
 $\bar{h}|\log\bar{h}| = 2^{-k}$ and $h_0 = 2^{-k_0}$.
 Note that $d_{n-1}^0(h) > c(n)b(h)$ and that $d_{n-1}^0(h_0) > c2^{-k_0}$ since $u$ is locally Lipschitz. Iterating step $1$ for
 $C_1$ large we obtain
\begin{align*}
 d_{n-1}^0(2^{-k}) &\geq c(1/2 + C_1/k)(1/2 + C_1/(k-1))...(1/2+C_1/k_0)2^{-k_0} \\
 &\geq c2^{-k}\exp(C_1\sum_{i=k_0}^k \frac{1}{i}) \\
 &\geq 2^{-k}\frac{k}{k_0},
\end{align*}
 showing that
 $$d_{n-1}^0(\bar{h}|\log\bar{h}|) \geq c\bar{h}|\log\bar{h}| \left(|\log\bar{h}||\log h_0|^{-1}\right).$$
 Finally, take $|\log h_0| = |\log \bar{h}|^{1/2}$. We conclude using convexity that
 $$d_{n-1}^0(\bar{h}) > |\log\bar{h}|^{-1}d(\bar{h}|\log\bar{h}|) > c\bar{h}|\log\bar{h}|^{1/2}.$$
 Since
 $$d_{1}^0(\bar{h})...d_{n-1}^0(\bar{h}) < C\bar{h}^{n/2}$$
 we thus have
 $$d_{n-2}^0(\bar{h}) < C\bar{h}^{1/2}|\log\bar{h}|^{-\epsilon(n)},$$
 giving the desired contradiction.

 In the case that $\bar{h} = 0$, we may run the above iteration for any $h > 0$ starting at height $h_0 = e^{-|\log h|^{1/2}}$ to obtain the
 contradiction.
\end{proof}

\begin{proof}[\textbf{Proof of Lemma \ref{Induction}}]
 First assume that $d_{1}^y(h) < h^{1/2}|\log h|^{-\alpha_1}$ for some $\alpha_1$. Since $|S_{h}^w(y)| < Ch^{n/2}$, Lemma \ref{Alexandrov} gives
 $$Mw(S_h^w(y)) > ch^{\frac{n-2}{2}}.$$
 Take $C(n)$ large enough that for $r = C(n)h^{1/2}|\log h|^{-\alpha_1}$,
 $$S_h^w(y) \subset B_{r/2}(0).$$
 Clearly, 
 $$M\left(\frac{1}{2}|x|^2 + w\right)(S_h^w(y)) > Mw(S_h^w(y)).$$
 Furthermore, $\nabla \left(u + \frac{1}{2}|x|^2\right)(B_r(0))$ contains a ball of radius
 $r/2$ around every point in $\nabla \left(u + \frac{1}{2}|x|^2\right)(S_h^w(y))$ (see Figure \ref{Lemma43Pic}). We conclude that
 \begin{align*}
  M\left(u+\frac{1}{2}|x|^2\right)(B_r(0)) &> crMw(S_h^w(y)) \\
  &\geq crh^{\frac{n-2}{2}} \\
  &\geq cr^{n-1}|\log h|^{(n-2)\alpha_1} \\
  &\geq cr^{n-1}|\log r|^{(n-2)\alpha_1}.
 \end{align*}

\begin{figure}
 \centering
    \includegraphics[scale=0.35]{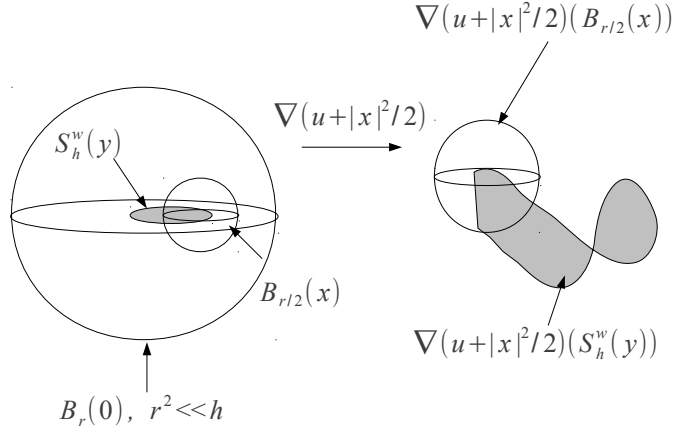}
 \caption{$\nabla (u+|x|^2/2)(B_r(0))$ contains an $r/2$-neighborhood of the surface $\nabla (u+|x|^2/2)(S_h^w(y))$, which projects in the $x_n$ direction to a set of 
 $\mathcal{H}^{n-1}$ measure at least $cr^{n-2}|\log r|^{(n-2)\alpha_1}$.}
 \label{Lemma43Pic}
\end{figure}

We proceed inductively. Assume that $d_i^y(h) > h^{1/2}|\log h|^{-\alpha_i}$ for $i = 1,...,k-1$ and that
$$d_k^y(h) < h^{1/2}|\log h|^{-\alpha_k}$$
for some $\alpha_1,...,\alpha_k$ to be chosen shortly. We aim to apply Lemma \ref{Alexandrov} to slices of the section $S_h^w(y)$ at $0$, but we need the height
of the plane $w(y) + \nabla w(y) \cdot (x-y) + h$ at $0$ to be at least $h$. We thus consider $S_{2h}^w(y)$ instead. Note that
$d_i^y(2h) > h^{1/2}|\log h|^{-\alpha_i}$ for $i \leq k-1$ and by convexity $d_k^y(2h) < 2h^{1/2}|\log h|^{-\alpha_k}$.

 Rotate so that the axes align with those for the John ellipsoid of $S_{2h}^w(y)$. Take the restriction of $w$ to the subspace spanned by $e_{k},...,e_{n-1}$, and
 call this restriction $w_k$. Let 
 $$S^{w_k} = S_{2h}^w(y) \cap \{x_1 = ... = x_{k-1} = 0\},$$
 the slice of the section $S_{2h}^w(y)$ in this subspace. Then since
 $$d_1^y(2h)...d_{n-1}^y(2h) \leq Ch^{\frac{n}{2}},$$
 by hypothesis we have
 $$|S^{w_k}|_{\mathcal{H}^{n-k}} \leq Ch^{\frac{n+1-k}{2}}|\log h|^{\alpha_1 + ... + \alpha_{k-1}}.$$
 Since $S_h^w(y)$ contains $0$ and $S^{w_k}$ is the slice of $S_{2h}^w(y)$, we know that $w_k$ has height at least $h$ in $S^{w_k}$. Using this and Lemma \ref{Alexandrov},
 $$Mw_k(S^{w_k}) \geq ch^{\frac{n-k-1}{2}}|\log h|^{-(\alpha_1 + ... + \alpha_{k-1})}.$$
 Finally, take $C(n)$ large enough that for $r = C(n)h^{1/2}|\log h|^{-\alpha_k}$ we have
 $$S^{w_k} \subset B_{r/2}(0).$$
 By strict quadratic growth, $\nabla \left(u + \frac{1}{2}|x|^2\right)(B_{r}(0))$ contains a ball of radius $r/2$ around every point in $\nabla (u + \frac{1}{2}|x|^2)(S^{w_k})$. It follows that
 \begin{align*}
  M\left(u+\frac{1}{2}|x|^2\right)(B_{r}(0)) &\geq cMw_k(S^{w_k})r^{k} \\
  &\geq ch^{\frac{n-k-1}{2}}|\log h|^{-(\alpha_1 + ... + \alpha_{k-1})}r^{k} \\
  &\geq cr^{n-1}|\log r|^{(n-k-1)\alpha_k - (\alpha_1 + ... + \alpha_{k-1})}.
 \end{align*}
Choose $\beta_i$ so that $(n-k-1)\beta_k-(\beta_1 + ... + \beta_{k-1}) = 1$ and let $\alpha_i = c\beta_i$, with $c$ chosen so that $\alpha_{n-2}=\epsilon$.
If $d_{1}^y(h) < h^{1/2}|\log h|^{-\alpha_1}$, we are done by the first step, so assume not. Then apply the inductive step for $i = 2,...,n-2$ to conclude
the proof.
\end{proof}


\section{Example}

In this section we construct a solution to $\det D^2u = 1$ in $\mathbb{R}^3$ such that $\Sigma$ has Hausdorff dimension exactly $2$. A
small modification gives the analagous example in $\mathbb{R}^n$ with a singular set of Hausdorff dimension $n-1$. This shows that the estimate on the 
Hausdorff dimension of the singular set in \cite{M} cannot be improved to $n-1-\delta$ for any $\delta$.

We proceed in several steps:
\begin{enumerate}
 \item The key step is to construct a subsolution $w$ in $\mathbb{R}^3$ satisfying $\det D^2w \geq 1$ that degenerates along $\{x_1 = x_2 = 0\}$ and grows
 logarithmically faster than quadratic in the $x_1$ direction, in particular like $x_1^2|\log x_1|^4$.
 \item Next, we construct $S \subset [-1,1]$ of Hausdorff dimension $1$ and a convex function $v$ on $[-1,1]$ such that $v$ separates
 from its tangent line faster than $r^2|\log r|^4$ at each point in $S$.
 \item Finally, we obtain our example by solving the Dirichlet problem
 $$\det D^2u = 1 \quad \text{ in } \Omega = \{|x'| < 1\} \times (-1,1), \quad \quad u|_{\partial \Omega} = C(v(x_1) + |x_2|)$$
 and comparing with $w$ at points in $S \times \{0\} \times \{\pm 1\}$.
\end{enumerate}

In the following analysis $c,C$ will denote small and large constants respectively.

\vspace{5mm}
\textbf{Construction of $w$:}
We first seek a function with just faster than quadratic growth in one direction and sections $S_h(0)$ with volume smaller than $h^{3/2}$. To that end, let
$$g(x_1,x_2) = x_1^2|\log x_1|^{\alpha} + \frac{|x_2|}{|\log x_2|^{\beta}}$$
for some $\alpha,\beta$ to be chosen shortly. It is tempting to guess $w = g(x_1,x_2)(1+x_3^2)$. However, the dominant terms in the determinant of the Hessian near the $x_2$ axis are
$$\frac{|\log x_1|^{\alpha}}{|\log x_2|^{2\beta}}\left(\frac{1}{|\log g|} - x_3^2\right),$$
where the first comes from the diagonal entries and the second from the mixed derivatives. Thus, this function is not convex. 
This motivates the following modification:
$$w(x',x_3) = g(x')\left(1+\frac{x_3^2}{|\log g(x')|}\right).$$
It is straightforward to check that the leading terms in the determinant of the Hessian (taking $x_3$ small) are
$$\frac{x_1^2|\log x_1|^{2\alpha}}{|x_2 (\log x_2)^{\beta+1} \log g|} + \frac{|\log x_1|^{\alpha}}{|(\log x_2)^{1+2\beta}\log g|},$$
since now the mixed derivative terms have the same homogeneity in $\log(g)$ as the diagonal terms. For $|x'|$ small, the first term is large in $\{|x_2| < |x_1|^3\}$, and by taking $\alpha = 2+2\beta$ the second term is bounded below by a positive constant
in $\{|x_2| \geq |x_1|^3\}$. Thus, up to rescaling and multiplying by a constant we have
$$\det D^2w \geq 1$$
in $\Omega = \{|x'| < 1\} \times (-1,1)$.
For convenience, we take $\beta = 1$ and $\alpha = 4$ for the rest of the example.

\vspace{5mm}
\textbf{Construction of $S$:}
Start with the interval $[-1/2,1/2]$. For the first step remove an open interval of length $\frac{5}{6}$ from the center.
At the $k^{th}$ step, remove intervals a fraction $\frac{5}{k+5}$ of the length of the remaining $2^k$ intervals from their centers. Denote the centers of the
removed intervals by $\{x_{i,k}\}_{i=1}^{2^k}$, and the intervals by $I_{i,k}$. Finally, let
$$S = [-1,1] - \cup_{i,k} I_{i,k}.$$
Let $l_k = |I_{i,k}|$. It is easy to check
\begin{align*}
 l_{k} &= \frac{10}{k+5} 2^{-k}\left(1-\frac{5}{k+4}\right)...\left(1-\frac{5}{6}\right) \\
 &\leq \frac{C}{k^{6}}2^{-k}.
\end{align*}

One checks similarly that the length of the remaining intervals after the $k^{th}$ step is at least
$$2^{-k}k^{-15}.$$
It follows that
\begin{equation}\label{LogHausdorffDim}
 \inf\left\{\sum_{i=1}^{\infty} r_i|\log(r_i)|^{15} : \{B_{r_i}(x_i)\} \text{ cover } S, r_i < \delta\right\} > c
\end{equation}
for all $\delta > 0$. In particular, the Hausdorff dimension of $S$ is exactly $1$.


\vspace{5mm}
\textbf{Construction of $v$:}
Let
$$ f(x) = \left\{
        \begin{array}{ll}
            |x| & \quad |x| \leq 1 \\
            2|x|-1 & \quad |x| > 1
        \end{array}
    \right.$$
We add rescalings of $f$ together to produce the desired function:
$$v(x) = \sum_{k=1}^{\infty} k^4 l_k^{2}f(l_k^{-1}(x-x_{i,k})).$$
We now check that $v$ satisfies the desired properties:
\begin{enumerate}
 \item v is convex, as the sum of convex functions. Furthermore, using that $l_k < C2^{-k}k^{-6}$ we have
 \begin{align*}
  |v(x)| &\leq C\sum_{k=1}^{\infty}\sum_{i=1}^{2^k} k^4 l_k \\
  &\leq C\sum_{k=1}^{\infty} k^{-2} \\
  &\leq C
 \end{align*}
 so $v$ is bounded.

 \item Let $x \in S$. We aim to show that $v$ separates from a tangent line more than
 $r^{2}|\log(r)|^4$ a distance $r$ from $x$. By subtracting a line assume that $v(x) = 0$ and that $0$ is a subgradient at $x$.
 Assume further that $x+r < 1/2$ and that $l_{k} < r \leq l_{k-1}$. 
 There are two cases to examine:

 \vspace{5mm}
 
 \textbf{Case 1:} There is some $y \in (x+r/2,x+r) \cap S$. Then by the construction of $S$ it is easy to see that there is some interval $I_{i,k}$ such
 that $I_{i,k} \subset (x,x+r)$. On this interval, $v$ grows by
 $$k^4l_k^2 \geq cl_k^2|\log(l_k)|^4 \geq cr^2|\log (r)|^4.$$
 \vspace{5mm}

 \textbf{Case 2:} Otherwise, there is an interval $I_{i,j}$ of length exceeding $r/2$ such that $(x+r/2,x+r) \subset I_{i,j}$. Then at the left point
 of $I_{i,j}$, the slope of $v$ jumps by at least $k^4l_k$. It follows that at $x+r$, $v$ is at least
 $$crk^4l_k \geq cr^2|\log(r)|^4.$$

 Thus, $v$ has the desired properties.
\end{enumerate}

\vspace{5mm}
\textbf{Construction of $u$:} We recall the following lemma on the solvability of the Monge-Amp\`{e}re equation (see \cite{Gut}).
\begin{lem}\label{Solvability}
 If $\Omega$ is open and convex, $\mu$ is a finite Borel measure and $\varphi$ is continuous on $\partial\Omega$
 then there exists a unique convex solution $u \in C(\bar{\Omega})$ to the Dirichlet problem
 $$\det D^2u = \mu, \quad u|_{\partial\Omega} = \varphi.$$
\end{lem}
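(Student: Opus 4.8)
The plan is to run the classical Aleksandrov--Bakelman argument: uniqueness from the comparison principle, and existence by first solving the Dirichlet problem when $\mu$ is a finite sum of point masses and then passing to a weak-$*$ limit. The tools needed, all standard and all in \cite{Gut}, are (i) the comparison principle for the Monge-Amp\`ere measure, (ii) weak-$*$ stability of the operator $w\mapsto Mw$ along sequences of convex functions converging locally uniformly, and (iii) the Aleksandrov maximum principle, of which the bound $Mv(\Omega)\,|\Omega|\geq c(n)|\min_\Omega v|^n$ of Lemma \ref{Alexandrov} and its reverse are the model cases, used for compactness and for control of boundary values.

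\emph{Uniqueness.} Let $u_1,u_2\in C(\bar\Omega)$ be convex with $Mu_1=Mu_2=\mu$ and $u_1=u_2=\varphi$ on $\partial\Omega$. The comparison principle says that if $v,w$ are convex on $\Omega$ with $Mv\geq Mw$ as measures and $v\leq w$ on $\partial\Omega$, then $v\leq w$ in $\Omega$; its proof is the usual one, arguing on the open set $U=\{w-v<-a\}$ (with $\overline{U}\subset\Omega$ for suitable $a>0$), where $v$ and $w+a$ agree on $\partial U$ and $v\geq w+a$ inside, so a comparison of subgradient images gives $Mv(U)\leq Mw(U)$, which becomes a strict contradiction with $Mv\geq Mw$ after a small quadratic perturbation of $v$. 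Applying this to the pairs $(u_1,u_2)$ and $(u_2,u_1)$ forces $u_1=u_2$.

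\emph{Existence.} First take $\mu=\sum_{j=1}^m a_j\delta_{x_j}$ with $x_j\in\Omega$, $a_j>0$, and apply Perron's method: set $u=\sup\{w:w\in\mathcal{F}\}$, where $\mathcal{F}=\{\,w\text{ convex on }\Omega:\ w\leq\varphi\text{ on }\partial\Omega,\ Mw\geq\mu\,\}$. The class $\mathcal F$ is nonempty (a maximum of sufficiently steep, sufficiently low cones with a vertex at each $x_j$ belongs to it) and closed under finite maxima, so $u$ is convex with $u\leq\varphi$ on $\partial\Omega$; weak-$*$ stability gives $Mu\geq\mu$, and if $Mu(E)>\mu(E)$ one lifts $u$ on a small convex cap away from the $x_j$ to contradict maximality, so $Mu=\mu$. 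The boundary condition $u=\varphi$ on $\partial\Omega$ holds because each competitor is constrained from above at $\partial\Omega$ by (an upper regularization of) $\varphi$, while the reverse inequality follows by producing, near each $\xi_0\in\partial\Omega$, competitors in $\mathcal F$ with value close to $\varphi(\xi_0)$ there — possible since the cones can be made so steep that their dip is concentrated away from $\xi_0$, using convexity of $\Omega$. For a general finite Borel $\mu$, pick atomic $\mu_k$ supported in $\Omega$ with $\mu_k\rightharpoonup\mu$ weakly-$*$ and $\mu_k(\Omega)\leq\mu(\Omega)$, solve $Mu_k=\mu_k$, $u_k|_{\partial\Omega}=\varphi$ by the previous step, and note that the Aleksandrov estimate bounds $\|u_k\|_{L^\infty(\Omega)}$ uniformly; with convexity and the two-sided boundary barriers this yields a uniform modulus of continuity up to $\partial\Omega$, so a subsequence converges uniformly on $\bar\Omega$ to a convex $u$ with $u=\varphi$ on $\partial\Omega$. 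Weak-$*$ stability then gives $Mu=\mu$, completing existence.

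\emph{Main obstacle.} The delicate point in both the Perron step and the limiting step is that Monge-Amp\`ere mass is neither lost to nor created at $\partial\Omega$ — equivalently, that the boundary datum is attained exactly while $Mu$ remains exactly $\mu$. This is where convexity of $\Omega$ is essential: it supplies, at each boundary point, competitors in $\mathcal F$ (respectively barriers below the approximants $u_k$) that carry the required Monge-Amp\`ere mass yet have value close to $\varphi$ there, together with an upper bound by a regularization of $\varphi$; these pin the solution to $\varphi$ on $\partial\Omega$ and prevent concentration of mass on the boundary. Everything else reduces to the comparison principle, the weak-$*$ continuity of $w\mapsto Mw$, and the Aleksandrov estimate of Lemma \ref{Alexandrov}.
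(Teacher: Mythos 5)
The paper does not prove this lemma; it is stated as a known fact with a citation to \cite{Gut} (Guti\'errez, Theorem~1.6.2), so there is no proof in the paper to compare against. Your sketch is a faithful reconstruction of the argument given there: uniqueness from the comparison principle (your direction of inequalities is correct, and perturbing $v$ by a small quadratic is indeed how one upgrades $Mv(U)\leq Mw(U)$ to a strict contradiction), existence for atomic $\mu$ by a Perron supremum of subsolutions, and passage to general finite $\mu$ by weak-$*$ approximation, with the Aleksandrov estimate supplying uniform $L^\infty$ bounds and compactness.

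One substantive point deserves attention. The theorem in \cite{Gut} assumes $\Omega$ is \emph{bounded and strictly convex}, and your argument in fact uses strict convexity exactly where you write ``using convexity of $\Omega$'': the barrier step near $\xi_0\in\partial\Omega$ produces competitors in $\mathcal F$ close to $\varphi(\xi_0)$ only because a supporting hyperplane at $\xi_0$ meets $\bar\Omega$ in the single point $\xi_0$. For a merely convex bounded $\Omega$ and arbitrary continuous $\varphi$, the lemma as stated is actually false: if $\partial\Omega$ contains a segment $\ell$ on which $\varphi$ is not convex, no convex $u\in C(\bar\Omega)$ can satisfy $u|_{\partial\Omega}=\varphi$, since $u|_{\ell}$ is automatically convex. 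The domain the paper actually uses, $\{|x'|<1\}\times(-1,1)$, is not strictly convex; what saves the application is that the chosen boundary datum $\varphi(x)=C(v(x_1)+|x_2|)$ is the restriction of a globally convex function of $(x_1,x_2)$, which can itself serve as the lower barrier on the flat boundary portions. So your proof is correct for the result as it appears in \cite{Gut}, but as written you should either add the hypothesis ``bounded and strictly convex'' or note the compatibility condition on $\varphi$ that is needed when, as in the paper's construction, $\Omega$ has flat boundary pieces.
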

Let $\varphi(x_1,x_2,x_3) = C(v(x_1) + |x_2|)$ for a constant $C$ we will choose shortly, 
and obtain $u$ by solving the Dirichlet problem
$$\det D^2 u = 1 \quad \text{ in } \Omega = \{|x'| < 1\} \times [-1,1], \quad \quad u|_{\partial \Omega} = \varphi.$$
Take $x \in S \times \{0\} \times \{\pm 1\}$. By translating and subtracting a linear function
assume that $x_1 = 0$ and $0$ is a subgradient for $\varphi$ at $x$. Taking $C$ large we guarantee that
$$\varphi(x_1,x_2,\pm 1) > C(x_1^2|\log(x_1)|^4 + |x_2|) > w(x_1,x_2,\pm 1)$$
for all $x_1,x_2$, and that that $\varphi > w$ on the sides of $\Omega$. Thus, $u \geq w$ in all of $\Omega$. 
Since $u = 0$ at both $(0,0,\pm 1)$ and $w(0,0,x_3) = 0$ for all $|x_3| < 1$, we have by convexity
that $u = 0$ along $(0,0,x_3)$.

This shows that for these examples
$$\Sigma \subset S \times \{0\} \times (-1,1),$$
which has Hausdorff dimension exactly $2$. 

\vspace{5mm}
\begin{rem}
 To get the analagous example in $\mathbb{R}^n$, take
 $$u(x_1,x_2,x_3) + x_4^2 + ... + x_n^2.$$
\end{rem}

\vspace{5mm}
\section{Optimality of Theorem \ref{Main}}
In \cite{M} we construct for any $\epsilon$ solutions to $\det D^2u = 1$ in $\mathbb{R}^n$ that are not 
in $W^{2,1+\epsilon}$, but as $\epsilon \rightarrow 0$ these examples blow up. 
In this section we aim to improve this by showing that the example in the previous section is not in $W^{2,1+\epsilon}$ for 
any $\epsilon$, and in fact the second derivatives are not in $L\log^{M} L$ for $M$ large.

Let $\phi(x) = (1+x)(\log(1+x))^{M}$ for some $M$ large. Then $\phi$ is convex for $x \geq 0$, so for any nonnegative integrable function $f$ and ball $B_r$ we have by 
Jensen's inequality that
$$\int_{B_r} \phi(r^nf(x)) \, dx \geq cr^n \phi\left(\int_{B_r} f(x) \, dx\right).$$
Taking $f(x) = r^{-n}\Delta u(x)$ we obtain
$$\int_{B_r} (1+\Delta u)(\log(1+\Delta u))^{M} \, dx \geq c\left(\int_{B_r} \Delta u \, dx\right)\left(\log\left(r^{-n}\int_{B_r} \Delta u \,dx\right)\right)^{M}.$$
Recall that at points $x \in S \times \{0\} \times (-1,1)^{n-2}$ the subsolutions $w$ touch $u$ by below, and that $w$ grows like $|x_2||\log x_2|^{-1}$ at $x$. It follows that
$$\sup_{\partial B_r(x)} (u-u(x)) \geq cr|\log r|^{-1}.$$
Applying convexity we conclude that
\begin{align*}
 \int_{B_r(x)}(1+\Delta u)\left(\log(1+\Delta u)\right)^{M} \, dx &\geq c\left(\int_{\partial B_r(x)} u_{\nu}\right)\left(\log\left(r^{-n}\int_{\partial B_r(x)} u_\nu\right)\right)^{M} \\
 &\geq cr^{n-1}|\log r|^{-1}\left(\log(cr^{-1}|\log(r)|^{-1})\right)^{M} \\
 &\geq cr^{n-1}|\log r|^{M - 1}.
\end{align*}
Cover $\Sigma \cap B_{1/2}$ with balls of radius less than $\delta$ and take a Vitali subcover  $\{B_{r_i}\}_{i=1}^{N}$. We then have
$$\int_{B_{1/2}} (1+\Delta u)\left(\log(1+\Delta u)\right)^{M} \, dx \geq c\sum_{i=1}^N r_i^{n-1}|\log r_i|^{M - 1},$$
and for $M$ large the right side goes to $\infty$ as $\delta \rightarrow 0$ by equation \ref{LogHausdorffDim}.

Thus, the second derivatives of $u$ are not in $L\log^{M}L$ for $M$ large, and in particular $u$ is not in $W^{2,1+\epsilon}$ for any $\epsilon$.

\vspace{5mm}



\end{document}